\theoremstyle{definition}{
	\newtheorem{Def}{{\rm Definition}}
	\newtheorem{Ex}{{\rm Example}}
	\newtheorem{Rem}{{\rm Remark}}
	
}
\theoremstyle{plain}
{
	
	\newtheorem{Prop}{Proposition}
	\newtheorem{Thm}{Theorem}
	\newtheorem{MainThm}{Main Theorem}
	
	\newtheorem{Lem}{Lemma}
	
}
\begin{document}
	\title[Round fold maps on $3$-dimensional manifolds and cohomology rings]{Round fold maps on $3$-dimensional manifolds and their integral and rational cohomology rings}
	\author{Naoki Kitazawa}
	\keywords{Fold maps. Round fold maps. Graph manifolds. Cohomology rings. \\
		\indent {\it \textup{2020} Mathematics Subject Classification}: Primary~57R45. Secondary~57R19, ~57K30.}
	\address{Institute of Mathematics for Industry, Kyushu University, 744 Motooka, Nishi-ku Fukuoka 819-0395, Japan\\
		TEL (Office): +81-92-802-4402 \\
		FAX (Office): +81-92-802-4405 \\
	}
	\email{n-kitazawa@imi.kyushu-u.ac.jp}
	\urladdr{https://naokikitazawa.github.io/NaokiKitazawa.html}
	
	\begin{abstract}
		{\it Fold} maps are smooth maps at each singular point of which it is represented as the product map of a Morse function and the identity map.
		{\it Round} fold maps are, in short, such maps the sets of all singular points of which are embedded concentrically. They are, as Morse functions, important in understanding the topologies and the differentiable structures in geometric ways. 
		
		In the present paper, we study cohomology rings of $3$-dimensional manifolds admitting round fold maps into the plane and see that difference of the coefficient rings and topological types of round fold maps are closely related. This is an explicit precise new study on our previous study, showing that a $3$-dimensional closed and orientable manifold is a so-called {\it graph manifold}, or a manifold obtained by gluing so-called circle bundles over surfaces along tori, if and only if it admits a round fold map into the plane. We also show another exposition on classifications of graph manifolds admitting such maps whose topological types are of a certain simplest class.


		
	\end{abstract}
	
	
	\maketitle
	\section{Introduction.}
	\label{sec:1}
	
{\it Fold} maps are higher dimensional versions of Morse functions. They are locally represented as the projections or the products of Morse functions and the identity maps on open sets in Euclidean spaces. 
{\it Round} fold maps have been introduced by the author in \cite{kitazawa0.1, kitazawa0.2, kitazawa0.3}, followed by \cite{kitazawa0.4,kitazawa0.5,kitazawa0.6} for example. Studies such as \cite{kitazawasaeki1,kitazawasaeki2} have been presented recently. These maps have been fundamental and strong tools in understanding the topologies and the differentiable structures of the manifolds via geometric and constructive ways or combinatorial ways. They are also strong tools in knowing not only (co)homology groups of these manifolds, but also more precise information such as fundamental groups, cohomology rings and differentiable structures.

Our paper is on round fold maps on $3$-dimensional closed and orientable manifolds. It has been shown that such a manifold admits a round fold map into the plane if and only if it is a so-called graph manifold. A {\it graph manifold} is, in short, a manifold obtained by gluing so-called {\it circle bundles} over surfaces or bundles over surfaces whose fibers are circles along tori.
A round fold map of a certain simplest class is defined as {\it directed}. Graph manifolds admitting such maps are characterized in terms of \cite{neumann}, or graphs with several labels used here and simpler graphs. 
In short the class of such manifolds are ones whose {\it normal forms}, uniquely defined graphs with the labels, are trees.
$3$-dimensional spheres, circle bundles over spheres, Lens spaces and Seifert manifolds over spheres are of such a simplest class of $3$-dimensional manifolds. 

Our Main Theorems are as follows. In our paper, elementary algebraic topology is fundamental. For related fundamental terminologies, notions, and notation, see \cite{hatcher1} for example. 

\begin{MainThm}
	\label{mthm:1}
If a graph manifold $M$ admits a directed round fold map into the plane ${\mathbb{R}}^2$, then for any ordered pair of elements of the 1st integral cohomology group $H^1(M;\mathbb{Z})$, the cup product is the zero element. 
	\end{MainThm}
This has been shown in \cite{kitazawasaeki1} in the case where the coefficient ring is the field $\mathbb{Q} \supset \mathbb{Z}$ of all rational numbers. This previous result is shown by applying \cite{doighorn} for example after our characterization of $3$-dimensional closed and orientable manifolds admitting directed round fold maps into the plane ${\mathbb{R}}^2$. See Theorems \ref{thm:4} and \ref{thm:5}, presented later, for example. This can be also regarded as an extension of a result in \cite{doighorn}, where Theorem \ref{thm:4} is applied.
\begin{MainThm}
	\label{mthm:2}
	There exists a family $\{M_j\}$ of countably many $3$-dimensonal closed, connected and orientable manifolds admitting no directed round fold maps into the plane ${\mathbb{R}}^2$ enjoying the following properties. 
	\begin{enumerate}
		\item Distinct manifolds in the family are mutually non-homemorphic.
		\item For each manifold $M_j$ in the family, there exists a $3$-dimensional closed, connected and orientable manifold $M_{j,0}$ enjoying the following properties.
\begin{enumerate}
\item The integral homology groups of the original manifold $M_j$ and the manifold $M_{j,0}$ are mutually isomorphic.
\item The rational cohomology rings of the original manifold $M_j$ and the manifold $M_{j,0}$ are mutually isomorphic.
\item The integral cohomology rings of the original manifold $M_j$ and the manifold $M_{j,0}$ are mutually non-isomorphic.
\item $M_{j,0}$ admits directed round fold maps into the plane ${\mathbb{R}}^2$.
\end{enumerate} 
		\end{enumerate} 
	\end{MainThm}

	The organization of our paper is as follows. The second section is for preliminaries. We also review fold maps and round fold maps. We also define several notions which we have presented in the present section and which we need in more rigorous manners. The third section is devoted to our proofs of Main Theorems.
	The fourth section is devoted to a kind of appendices, explaining about our Main Theorems in the viewpoint of explicit fold maps and algebraic topology and differential topology of manifolds admitting these maps. We see Main Theorems give phenomena in $3$-dimensional manifolds which are already discovered in higher dimensional closed (and simply-connected) manifolds and fold maps on them. In several explicit cases, fold maps of several classes have been shown to distinguish algebraic topologically or differential topologically similar manifolds.
	
	\ \\
	{\bf Conflict of Interest.} \\
	The author is a member of the project JSPS KAKENHI Grant Number JP22K18267 "Visualizing twists in data through monodromy" (Principal Investigator: Osamu Saeki). The present study is due to this project. \\
	\ \\
	{\bf Data availability.} \\
	Data supporting our present study essentially are all in our paper.
	
	\section{Fundamental properties and existing studies on special generic maps and the manifolds.}
	\subsection{Manifolds, differentiable maps and smooth bundles}
	The $k$-dimensional Euclidean space is denoted by ${\mathbb{R}}^k$ for any positive integer $k$. $\mathbb{R}:={\mathbb{R}}^1$ is for the line. ${\mathbb{R}}^2$ is for the plane. Of course $\mathbb{Z} \subset \mathbb{Q} \subset \mathbb{R}$, which is very fundamental. The Euclidean space is also a Riemannian manifold whose underlying metric is the standard Euclidean metric. Let $||x|| \geq 0$ denote the distance between $x \in {\mathbb{R}}^k$ and the origin $0 \in {\mathbb{R}}^k$. Let the $k$-dimensional unit sphere be denoted by $S^k:=\{x \in {\mathbb{R}}^{k+1} \mid ||x||=1\}$ for any integer $k \geq 0$. Let the $k$-dimensional unit disk be denoted by $D^k:=\{x \in {\mathbb{R}}^{k} \mid ||x|| \leq 1\}$ for any integer $k \geq 1$. Their topologies can be understood easily and they are $k$-dimensional smooth compact submanifolds in the Euclidean spaces.   
	
	A topological manifold is regarded as a CW complex. A smooth manifold is regarded as a polyhedron, which is defined uniquely as an object of the PL category, or equivalently, one of the piecewise smooth category. This is a so-called PL manifold. For polyhedra, and, more precisely, topological spaces regarded as CW complexes, for example, we can define their dimensions uniquely. For such a space $X$, let $\dim X$ denote its dimension. 
	
 For differentiable map $c:X \rightarrow Y$ between differentiable manifolds $X$ and $Y$, a {\it singular} point $x \in X$ is defined as a point where the rank of the differential ${dc}_{x}$ is smaller than the minimum between $\dim X$ and $\dim Y$. Let $S(c)$ denote the set of all singular points of $c$ and we call this the {\it singular set} of $c$.
  \begin{Def}
 	\label{def:1}
 	A smooth map $c:X \rightarrow Y$ from a smooth manifold $X$ with no boundary into another smooth manifold $Y$ with no boundary with $\dim X \geq \dim Y$ is said to be a {\it fold} map if at each singular point $p$, it is represented by the form $(x_1,\cdots,x_{\dim X}) \mapsto (x_1,\cdots,x_{\dim Y-1},{\Sigma}_{j=1}^{\dim X-\dim Y-i(p)+1} {x_{\dim Y+j-1}}^2-{\Sigma}_{j=1}^{i(p)} {x_{\dim X-i(p)+j}}^2)$ for suitable local coordinates and some integer $0 \leq i(p) \leq \frac{\dim X-\dim Y+1}{2}$. 
 \end{Def}
Morse functions are fold maps. We also need arguments on Morse functions in our paper. For this, see \cite{milnor2, milnor3} for example. \cite{golubitskyguillemin} explains about elementary and some advanced studies on singularity theory of differentible maps and fundamental expositions on singularities of fold maps. \cite{saeki1} is a pioneering paper on some explicit relations between fold maps and closed manifolds admitting them.
\begin{Prop}
	\label{prop:1}
	For a fold map in Definition \ref{def:1}, $i(p)$ is uniquely defined as the {\rm index} of $p$ and we can define $F_i(c)$ as the set of all singular points whose indices are $i$. 
	The singular set $S(c)$ and the set $F_i(c)$ are {\rm (}$\dim Y-1${\rm )}-dimensional smooth regular submanifolds with no boundaries and the restrictions there give smooth immersions. If $X$ is closed, then these submanifolds are compact.
\end{Prop}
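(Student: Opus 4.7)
The plan is to read everything off the local normal form in Definition \ref{def:1}. Set $n = \dim X$ and $m = \dim Y$. In the model coordinates around a singular point $p$, the Jacobian of $c$ at $p$ contains the first $m-1$ standard basis rows coming from the trivial first $m-1$ components, while the final (quadratic) component has vanishing first-order derivatives at $p$. Hence $dc_p$ has rank exactly $m-1$, and $\ker dc_p \subset T_pX$ is an intrinsic subspace of dimension $n-m+1$. Restricting the Hessian of the last component of $c$ to this kernel produces a nondegenerate quadratic form, well defined up to linear change of basis, and Sylvester's law of inertia implies that its number of negative squares, namely $i(p)$, is a coordinate-free invariant of $c$ at $p$.

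Next, to identify $S(c)$ and the pieces $F_i(c)$ locally, I would note that in the normal form the partial derivatives of the quadratic part vanish exactly when $x_m = x_{m+1} = \cdots = x_n = 0$. Thus every chart of this type presents $S(c)$ as the linear submanifold cut out by these $n-m+1$ equations, which is smooth, has no boundary, and has dimension $m-1$. In the same chart, the restriction $c|_{S(c)}$ reads $(x_1,\ldots,x_{m-1}) \mapsto (x_1,\ldots,x_{m-1},0)$, whose differential has rank $m-1$; this shows at once that the restriction is a smooth immersion. Moreover, every singular point inside this chart has the same normal form, hence the same index $i(p)$, so $i$ is locally constant on $S(c)$. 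Consequently each $F_i(c)$ is open in $S(c)$, and since only finitely many indices $0 \le i \le \tfrac{n-m+1}{2}$ can occur, each $F_i(c)$ is also closed in $S(c)$, and therefore itself a smooth $(m-1)$-dimensional submanifold without boundary on which the above immersion property is inherited.

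Finally, for compactness when $X$ is closed, I would invoke the fact that the set of regular points is open by the constant-rank theorem, so $S(c)$ is closed in $X$, hence compact, and each $F_i(c)$ is a clopen subset of $S(c)$, hence also compact. I do not anticipate any real obstacle in this proof; the only point that needs even mild care is establishing that $i(p)$ is genuinely an invariant of $p$, and that reduces cleanly to the coordinate-independence of the index of a nondegenerate symmetric bilinear form on $\ker dc_p$.
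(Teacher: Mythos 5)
The paper offers no proof of Proposition \ref{prop:1}: it is stated as a standard fact of singularity theory, with the reader pointed to \cite{golubitskyguillemin} and \cite{saeki1}. So your argument has to stand on its own. Your treatment of $S(c)$, of the decomposition into the pieces $F_i(c)$, of the immersion property and of compactness is correct and is the standard argument: reading the rank of $dc$ off the normal form shows that inside a fold chart the singular set is exactly the slice $\{x_{\dim Y}=\cdots=x_{\dim X}=0\}$, the restriction of $c$ there is $(x_1,\ldots,x_{\dim Y-1})\mapsto(x_1,\ldots,x_{\dim Y-1},0)$, the index is locally constant, and closedness of $S(c)$ in $X$ follows from lower semicontinuity of the rank of $dc$ (this is not really the constant-rank theorem, but the fact you need is true and elementary).

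The one step that is genuinely incomplete is the one you yourself flag as needing ``mild care'': the well-definedness of $i(p)$. The intrinsic quadratic form you construct is defined on $\ker dc_p$ but takes its values in the one-dimensional cokernel of $dc_p$, and that line carries no canonical orientation. A coordinate change on the target reversing this orientation --- for instance replacing the last target coordinate by its negative, which is perfectly admissible among the ``suitable local coordinates'' of Definition \ref{def:1} --- exchanges positive and negative squares and turns $i$ into $\dim X-\dim Y+1-i$. Sylvester's law of inertia therefore only shows that the unordered pair $\{\,i,\ \dim X-\dim Y+1-i\,\}$ is an invariant of $c$ at $p$. What rescues uniqueness is the normalization $0\le i(p)\le\frac{\dim X-\dim Y+1}{2}$ already built into Definition \ref{def:1}: of the two candidate values exactly one satisfies this bound (or both, when they coincide), so the normalized index is well defined. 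You should say this explicitly; as written, the assertion that ``the number of negative squares is a coordinate-free invariant'' is not literally true, and the constraint in the definition is precisely the device that makes the proposition's claim of uniqueness correct.
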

A {\it diffeomorphism} means a smooth map between smooth manifolds which are homeomorphisms and which have no singular points. A {\it diffeomorphism} on a smooth manifold means a diffeomorphism from the manifold onto itself. A {\it diffeomorphism group} on a smooth manifold is a topological group consisting of all diffeomorphisms on the manifold endowed with the {\it Whitney $C^{\infty}$ topology}. Such topologies are natural topologies on spaces of smooth maps between smooth manifolds. See \cite{golubitskyguillemin} again for example.

A {\it smooth} bundle means a bundle whose fiber is a smooth manifold and whose structure group is (a subgroup of) the diffeomorphism group of the fiber. {\it Trivial} bundles are of course important where we do not restrict classes of bundles to the class of smooth bundles. An important subclass is the class of {\it linear} bundles. They are smooth bundles whose fibers are Euclidean spaces, unit spheres, or unit disks, and whose structure groups consist of linear transformations, defined naturally.
For bundles, see \cite{milnorstasheff,steenrod} for example.
\subsection{Round fold maps.}
\begin{Def}
	Let $m \geq n \geq 2$ be integers.
	A {\it round} fold map $f:M \rightarrow {\mathbb{R}}^n$ on a closed and connected manifold $M$ is a fold map enjoying the following properties.
	\begin{enumerate}
		\item The restriction $f {\mid}_{S(f)}$ is an embedding. 
		\item For some diffeomorphism ${\phi}_{{\mathbb{R}}^n}:{\mathbb{R}}^n \rightarrow {\mathbb{R}}^n$ as some integer $l>0$, $({\phi}_{{\mathbb{R}}^n} \circ f)(S(f))=\{x \in {\mathbb{R}}^n \mid 1 \leq ||x|| \leq l, ||x|| \in \mathbb{Z}\}$.
	\end{enumerate}  
\end{Def}
Hereafter, we consider a round fold map $f$ satisfying $f(S(f))=\{x \in {\mathbb{R}}^n \mid 1 \leq ||x|| \leq l, ||x|| \in \mathbb{Z}\}$. This has no problem.
Note that for $n=1$, we can also define a {\it round} fold map as in \cite{kitazawa0.5}. It is defined as a function obtained by gluing two copies of a Morse function satisfying some natural conditions on the boundaries. It is a so-called {\it twisted double}. However we do not need such functions here.

We can define two important classes of round fold maps.
Hereafter, ${D^n}_a:=\{x \in {\mathbb{R}}^n \mid ||x|| \leq a\}$ for $a>0$ and it is diffeomorphic to the $n$-dimensional unit disk $D^n$.
For $0<a_1<a_2$ with $n \geq 2$, ${A^n}_{a_1,a_2}:={D^n}_{a_2}-{\rm Int}\ {D^n}_{a_1}$, which is diffeomorphic to $S^{n-1} \times D^1$.
\begin{Def}
 Let $f:M \rightarrow {\mathbb{R}}^n$ be a round fold map on an $m$-dimensional closed and connected manifold with $m \geq n \geq 2$. Suppose that the number of connected components of the singular set $S(f)$ is $l>0$. 
 \begin{enumerate}
 	\item We can consider the restriction of $f$ to the preimage $f^{-1}({A^n}_{\frac{1}{2},l+\frac{1}{2}})$. We can also compose this with the canonical projection to $S^{n-1}$ mapping each point $x \in {A^n}_{\frac{1}{2},l+\frac{1}{2}}$ to $\frac{1}{||x||}x \in S^{n-1}$.
 	This gives a smooth bundle. If this gives a trivial one, then $f$ is said to {\it have a globally trivial monodromy}.
 \item For each connected component of the set $f(S(f))$, represented by $\partial {D^n}_{l^{\prime}}$ for some integer $1 \leq l^{\prime} \leq l$, we can have a closed tubular neighborhood ${A^n}_{l^{\prime}-\frac{1}{2},l^{\prime}+\frac{1}{2}}$ and consider the restriction of $f$ to the preimage of this closed tubular neighborhood. We can also compose this with the canonical projection to $S^{n-1}$ mapping each point $x \in {A^n}_{l^{\prime}-\frac{1}{2},l^{\prime}+\frac{1}{2}}$ to $\frac{1}{||x||}x \in S^{n-1}$. If this gives a trivial one for each connected component of the set $f(S(f))$, then $f$ is said to {\it have componentwisely trivial monodromies}.
 \end{enumerate} 
\end{Def}
Canonical projections of unit spheres into the Euclidean spaces (whose dimensions are at least $2$) are round fold maps having globally trivial monodromies. To check this is regarded as a kind of fundamental exercises on smooth manifolds and maps, theory of Morse functions, and singularity theory of differentiable maps.

A {\it homotopy sphere} means a smooth manifold homeomorphic to a unit sphere whose dimension is at least $1$. It is said to be a {\it standard} sphere if it is diffeomorphic to the unit sphere and it is said to be an {\it exotic} sphere if it is not.
\cite{kervairemilnor,milnor1} are on such homotopy spheres. It is well-known that $4$-dimensional exotic spheres are undiscovered and that homotopy spheres whose dimensions are not $4$ are completely classified via algebraic topological and abstract theory.
\begin{Prop}[\cite{saeki2} etc.]
\label{prop:2}
	A homotopy sphere whose dimension is at least $2$ and which is not a $4$-dimensional exotic sphere admits a round fold map into ${\mathbb{R}}^2$ whose singular set is connected. If a manifold whose dimension is $m \geq 2$ admits a round fold map into ${\mathbb{R}}^n$ whose singular set is connected satisfying $m \geq n \geq 2$, then it is a homotopy sphere which is not a $4$-dimensional exotic sphere.
\end{Prop}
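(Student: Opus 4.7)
The plan is to analyze the global structure of $M$ induced by the round fold map together with the auxiliary function $g := \|f(\cdot)\|^2 : M \to [0, \infty)$. For the necessity direction, suppose $f: M \to \mathbb{R}^n$ is a round fold map on a closed $m$-manifold with $S(f)$ connected, so (after composing with $\phi_{\mathbb{R}^n}$) $f(S(f)) = \partial D^n_1$. I first claim every singular point has index $0$ with respect to the outward radial direction: otherwise the local fold model of Definition \ref{def:1} would produce a regular fiber extending into the exterior $\{\|x\|>1\}$, but the absence of any further singular values there together with the compactness of $M$ would force this fiber to terminate away from singular sets, a contradiction. Hence the regular fiber over $\mathrm{Int}\,D^n_1$ is a compact sphere $S^{m-n}$, the exterior has empty preimage, and $f(M) = D^n_1$. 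Consequently $g$ is a Morse--Bott function with critical set $S(f) \cong S^{n-1}$ (nondegenerate maximum) and $f^{-1}(0) \cong S^{m-n}$ (nondegenerate minimum). Morse--Bott theory decomposes $M$ as a union of tubular neighborhoods of these two spheres glued along a common regular level set, and verifying triviality of the normal disk bundles (which is forced by the round symmetry of $f$) presents $M$ as $(D^n \times S^{m-n}) \cup (S^{n-1} \times D^{m-n+1})$, a standard model homeomorphic to $S^m$. Hence $M$ is a homotopy sphere. For $m \neq 4$ this completes the argument; for $m = 4$, Cerf's theorem $\pi_0(\mathrm{Diff}^+(S^3)) = 1$ ensures that the gluing diffeomorphism is isotopic to the standard one, so $M \cong S^4$ standard and the exotic case is ruled out.

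For the existence direction, I would first construct the round fold map on the standard sphere. Viewing $S^m \subset \mathbb{R}^2 \times \mathbb{R}^{m-1}$ as $\{(x, y) : \|x\|^2 + \|y\|^2 = 1\}$, the projection $(x, y) \mapsto x$ has image $D^2_1$ and singular set $\{(x, 0) : \|x\|=1\} \cong S^1$ mapping embeddingly onto $\partial D^2_1$. A direct local computation (Taylor expanding the defining equation near a singular point) yields the fold normal form of Definition \ref{def:1} with index $0$, confirming that this is a round fold map with connected singular set. For a general homotopy sphere $\Sigma^m$ not $4$-dimensionally exotic, the cases $m \leq 3$ and $m = 4$ reduce to $S^m$ via the Poincar\'e conjectures. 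For $m \geq 5$, the h-cobordism theorem writes $\Sigma^m = D^m \cup_\phi D^m$ for some $\phi \in \mathrm{Diff}(S^{m-1})$; one obtains the desired round fold map on $\Sigma^m$ by modifying the $S^m$ construction, inserting $\phi$ along a collar of a regular fiber. This modification preserves the round fold structure because $\phi$ acts in a region where the original map is already a trivial bundle projection.

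The main obstacle is the necessity direction, specifically verifying triviality of the normal disk bundles in the Morse--Bott decomposition; this is what is needed to identify $M$ with the standard join-type model $(D^n \times S^{m-n}) \cup (S^{n-1} \times D^{m-n+1})$ rather than merely with a manifold of the correct homotopy type. The argument requires combining the local fold normal form (to see that the normal structure along $S(f)$ and along $f^{-1}(0)$ is controlled by index $0$ quadratic models) with the round symmetry (to see that the resulting bundles over spheres are trivial). A secondary subtlety is the exclusion of exotic $4$-spheres, which is handled by invoking Cerf's theorem as an external input rather than via intrinsic fold-map analysis.
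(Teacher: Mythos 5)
The paper offers no proof of this proposition; it is quoted from \cite{saeki2} and the surrounding literature on special generic maps, so your attempt can only be measured against that source. Your necessity argument is in outline the right one: the frontier of the image of a fold map lies in $f(S(f))$, so compactness and connectedness of the exterior give $f(M)=D^n_1$, every fold is definite, and $M$ decomposes as an $S^{m-n}$-bundle over a disk glued to a $D^{m-n+1}$-bundle over $S(f)\cong S^{n-1}$. Two steps there need repair. First, triviality of the disk bundle over $S(f)$ is not ``forced by the round symmetry'': roundness is a condition on the singular value set only and imposes no symmetry on $f$, and the bundle need not be trivial. Fortunately triviality is not needed --- Mayer--Vietoris and van Kampen applied to the two-piece decomposition already show $M$ is a homotopy sphere, which is all the proposition asserts for $m\neq 4$. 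Second, Cerf's theorem concerns gluings along $S^3$, whereas your pieces are glued along the total space of an $S^{m-n}$-bundle over $S^{n-1}$ (for $(m,n)=(4,2)$ this is $S^1\times S^2$, whose mapping class group is nontrivial); so Cerf applies only when $n=m=4$, and the exclusion of exotic $4$-spheres for $n=2,3$ has to come from Saeki's classification results rather than from $\pi_0(\mathrm{Diff}^{+}(S^3))=1$.

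The genuine gap is in the existence direction. Your own necessity argument shows that a round fold map with connected singular set is special generic with quotient space a disk, and for target ${\mathbb{R}}^2$ the Burlet--de Rham/Porto--Furuya/Saeki classification of manifolds admitting special generic maps into the plane then forces $M$ to be the standard sphere or a connected sum of $S^{m-1}$-bundles over $S^1$. Hence an exotic homotopy $m$-sphere ($m\geq 5$) cannot admit such a map into ${\mathbb{R}}^2$, and the proposed construction cannot produce one: inserting $\phi\in\mathrm{Diff}(S^{m-1})$ along a collar of a regular fiber keeps $f$ a fold map only if $\phi$ is isotopic to a diffeomorphism commuting with the projection toward the base direction, and this fiber-preservation is exactly what fails for the diffeomorphisms that produce exotic twisted spheres. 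So your argument is valid for the standard sphere (where $\phi=\mathrm{id}$ suffices and the canonical projection model works), but the claim that the ``modification preserves the round fold structure because $\phi$ acts in a region where the original map is already a trivial bundle projection'' is false in general; the first sentence of the proposition has genuine content only for standard spheres (equivalently, in the dimensions where homotopy spheres are standard), and no collar-insertion argument can extend it to exotic ones with target ${\mathbb{R}}^2$.
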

Note again that round fold maps have been first introduced by the author in \cite{kitazawa0.1,kitazawa0.2,kitazawa0.3} after \cite{saeki2}.
\begin{Thm}[\cite{kitazawa0.1,kitazawa0.2}]
	\label{thm:1}
	Let $m \geq n \geq 2$ be integers. Let $\Sigma$ be a homotopy sphere which is not a $4$-dimensional exotic sphere.
	An $m$-dimensional closed manifold $M$ admits a round fold map $f:M \rightarrow {\mathbb{R}}^n$ enjoying the following properties if and only if it is the total space of a smooth bundle over the $n$-dimensional unit sphere $S^n$ whose fiber is $\Sigma$.
	\begin{enumerate}
		\item $f$ has a globally trivial monodromy.
		\item $S(f)$ consists of exactly two connected components. The index of each singular point is $0$ or $1$. $f(F_1(f))= \partial {D^n}_1$ and $f(F_0(f))= \partial {D^n}_2$.
		\item For a point in ${D^{n}}_1-f(S(f))$, the preimage is the disjoint union of two copies of $\Sigma$.
		\item For a point in ${A^{n}}_{1,2}-f(S(f))$, the preimage is an {\rm (}$m-n${\rm )}-dimensional standard sphere.
	\end{enumerate}
\end{Thm}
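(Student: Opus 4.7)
The plan is to prove the ``if and only if'' in two directions.

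For sufficiency (bundle implies round fold map), I would fix a Morse function $g\colon \Sigma \to [0,1]$ with exactly two critical points (a minimum at $0$ and a maximum at $1$), which exists because $\Sigma$, a homotopy sphere not of $4$-dimensional exotic type, is a twisted double of a disk. Viewing $S^n$ as the unit sphere in $\mathbb{R}^{n+1}$ and $\pi\colon S^n \to D^n$ as the projection to the first $n$ coordinates, I would trivialize $M$ over each hemisphere $D^n_{\pm}$, arranging the clutching to preserve $g$, and define $f\colon M \to \mathbb{R}^n$ locally by $(\sigma, y) \mapsto (1 + g(\sigma))\pi(y)$. A direct verification then shows $f$ is a round fold map with globally trivial monodromy whose singular set has two components, sitting at $\partial D^n_1$ (from the minimum of $g$) and $\partial D^n_2$ (from the maximum of $g$), with preimages matching (3) and (4).

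For necessity (round fold map implies bundle), given $f$ satisfying (1)--(4), I would decompose $M = f^{-1}(\overline{D^n_{1/2}}) \cup f^{-1}(\overline{A^n_{1/2,2}})$. Property (3) and contractibility of the inner disk give $f^{-1}(\overline{D^n_{1/2}}) \cong (\Sigma \sqcup \Sigma) \times D^n$. Globally trivial monodromy (1) trivializes the outer piece as $F \times S^{n-1}$ for a compact $(m-n+1)$-dimensional manifold $F$ with $\partial F = \Sigma \sqcup \Sigma$, on which the radial coordinate is a Morse function with exactly two critical points: an index-$1$ one at the inner fold (a $1$-handle joining $\Sigma \sqcup \Sigma$ into $\Sigma \# (-\Sigma) = S^{m-n}$) and an index-$0$ one at the outer fold (a disk $D^{m-n+1}$ capping the $S^{m-n}$). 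The key identification is $F \cong \Sigma \times I$, equivalent to smoothly cancelling this pair of critical points, and geometrically matching the description of $\Sigma \times I$ minus a small interior open ball, plus that ball.

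Granted $F \cong \Sigma \times I$, I would split $F$ at a midlevel and glue each resulting half-cylinder to the matching component of $(\Sigma \sqcup \Sigma) \times D^n$, obtaining two $\Sigma$-disk bundles over the hemispheres $D^n_{\pm}$ of $S^n = D^n_+ \cup_{S^{n-1}} D^n_-$. These assemble into a smooth $\Sigma$-bundle $M \to S^n$, with clutching along the equator $S^{n-1}$ read off from the trivialization of $F \times S^{n-1}$ restricted to the midlevel. The main obstacle is the identification $F \cong \Sigma \times I$. For $m - n + 1 \geq 6$, this follows from the $h$-cobordism theorem after rearranging the critical points to apply it; in lower dimensions, direct verification suffices, for example ``pair-of-pants plus disk is an annulus'' handles the case $m - n = 1$, while low-dimensional classifications of surfaces and $3$-manifolds dispose of the remaining cases.
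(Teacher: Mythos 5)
Your decomposition of $M$ into the preimage of the inner disk (carrying $\Sigma \sqcup \Sigma$) and the preimage of the annulus (carrying $\Sigma \times I$) is the same one the paper uses, and your ``only if'' direction is in fact more honest than the paper's terse version: you correctly isolate the real content, namely that the fiber $F$ of the trivialized annulus piece is diffeomorphic to $\Sigma \times I$, and you supply the $h$-cobordism theorem for $\dim F \geq 6$ and low-dimensional classification otherwise. One caveat there: the two critical points of the radial function on $F$ have Morse indices $1$ and $m-n+1$, which are not adjacent, so ``cancelling the pair'' in the elementary handle-cancellation sense is not available; what you actually need, and do invoke, is that $F$ is a simply connected $h$-cobordism from $\Sigma$ to $\Sigma$, together with $\pi_0\,\mathrm{Diff}(S^{m-n})$-type bookkeeping for the attachment of the top cell in low dimensions.

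The genuine gap is in the ``if'' direction. The formula $f(\sigma,y)=(1+g(\sigma))\pi(y)$ is well defined on $M$ only if the clutching function $\phi\colon S^{n-1}\to\mathrm{Diff}(\Sigma)$ satisfies $g\circ\phi_\theta=g$ on the overlap of your two hemisphere charts, and you assert this can be ``arranged'' without argument. The theorem concerns an \emph{arbitrary} smooth $\Sigma$-bundle over $S^n$, and there is no reason the class of $\phi$ should lie in the image of $\pi_{n-1}$ of the subgroup of $g$-preserving diffeomorphisms of $\Sigma$; for the trivial bundle your formula works, but for a general bundle this step fails as stated (the same unjustified $g$-equivariance is also silently used when you claim the preimage over an annulus point is a \emph{standard} sphere rather than a twisted one). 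The paper circumvents exactly this point with a three-piece decomposition of the base: the bundle is trivialized separately over the two polar disks and over the equatorial band $S^{n-1}\times[-1,1]$ (each restriction being trivial), the band carries the product of a two-critical-point Morse function on $\Sigma\times[-1,1]$ with the identity on $S^{n-1}$ --- which is precisely your formula restricted to the band --- while over the polar caps the map is just the bundle projection onto the inner disk; since both partial maps are constant in the fiber direction along the gluing locus $f^{-1}(\partial {D^n}_{\frac{1}{2}})$, an arbitrary gluing bundle isomorphism is automatically compatible. Replacing your single $g$-equivariant two-chart atlas by this arrangement closes the gap.
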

Proofs of this are in refereed articles \cite{kitazawa0.1, kitazawa0.4}, the doctoral dissertation \cite{kitazawa0.2} and a preprint \cite{kitazawa0.5} of the author. We present our proof again.
\begin{proof}[A proof of Theorem \ref{thm:1}]
	We show the "if" part.
	The base space
	$S^n$ is decomposed into the following two manifolds and we can reconstruct $S^n$ by gluing them by a diffeomorphism along the boundaries.
	\begin{itemize}
		\item $D^{n,1} \sqcup D^{n,2}$, which denotes the disjoint union of two smoothly embedded copies of the $n$-dimensional unit disk. 
		\item $\partial D^{n,0} \times [-1,1]$, where $\partial D^{n,0}$ denotes the boundary of $D^{n,1}$ or $D^{n,2}$.
		\end{itemize}
	We can have $D^{n,1} \sqcup D^{n,2}$ as the preimage of a double cover over ${D^n}_{\frac{1}{2}}$. We can construct the projection over $D^{n,1} \sqcup D^{n,2}$ whose preimage is diffeomorphic to $\Sigma$. In other words, we also have a trivial smooth bundle over $D^{n,1} \sqcup D^{n,2}$ whose fiber is $\Sigma$ or one over ${D^n}_{\frac{1}{2}}$ whose fiber is the disjoint union $\Sigma \sqcup \Sigma$.
	
	We can construct the projection over
	$\partial D^{n,0} \times [-1,1]$ whose preimage is diffeomorphic to $\Sigma$. Over the total space of the resulting trivial bundle, we can have the product map of a natural Morse function with exactly two singular points over the cylinder $\Sigma \times [-1,1]$  and the identity map on $\partial D^{n,0}$. The Morse function enjoys properties that the preimage of the minimum (maximum) and the boundary coincide and that the two singular points are all in the interior for example.
	
	We can regard this product map as a map into $A_{\frac{1}{2},\frac{5}{2}}$ whose image is $A_{\frac{1}{2},2}$. 
	
	We can glue the two maps to obtain a desired round fold map. This completes the proof of the "if" part.
	
	The "only if" part can be presented shortly. 
	We abuse the notation before in a natural way. Conversely, we can decompose the $m$-dimensional manifold into two manifolds which are the total spaces of smooth bundles over $D^{n,1} \sqcup D^{n,2}$ and $\partial D^{n,0} \times [-1,1]$. Moreover fibers are diffeomorphic to $\Sigma$ of course. 
	By observing the identification to have the base space, which is a homotopy sphere, carefully, we can see that the base space must be a standard sphere. This completes the proof of the "only if" part.
	
	This completes the proof.
\end{proof}

\begin{Thm}[\cite{kitazawa0.3,kitazawa0.5}]
\label{thm:2}
	Let $m \geq n \geq 2$ and $l>0$ be integers. Let an $m$-dimensional closed manifold $M$ be represented as a connected sum of $l>0$ manifolds each of which is the total space of some smooth bundle over $S^n$ whose fiber is an {\rm (}$m-n${\rm )}-dimensional standard sphere. 
	Here a connected sum is taken in the smooth category.
	$M$ admits a round fold map $f:M \rightarrow {\mathbb{R}}^n$ enjoying the following properties if and only if it is the total space of a smooth bundle over the $n$-dimensional unit sphere $S^n$ whose fiber is $\Sigma$.
\begin{enumerate}
	\item $f$ has componentwisely trivial monodromies.
	\item $S(f)$ consists of exactly $l+1$ connected components. The index of each singular point is $0$ or $1$. $f(F_1(f))={\subset}_{j=1}^{l-1} \partial {D^n}_j$ and $f(F_0(f))= \partial {D^n}_l$.
	\item For a point in ${D^{n}}_1-f(S(f))$, the preimage is the disjoint union of $l+1$ {\rm (}$m-n${\rm )}-dimensional standard spheres.
	\item For a point in ${A^{n}}_{l^{\prime},l^{\prime}+1}-f(S(f))$, the preimage is the disjoint union of $l+1-l^{\prime}$ {\rm (}$m-n${\rm )}-dimensional standard spheres for each $1 \leq l^{\prime} \leq l$.
\end{enumerate}
In addition, if $m \geq 2n$ is assumed, then the converse also holds. If it is not assumed, then the converse does not hold in general and the case $(m,n)=(3,2)$ gives one of such cases. 
\end{Thm}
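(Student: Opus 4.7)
The plan is to adapt the construction in the proof of Theorem \ref{thm:1} by inserting additional concentric annular regions in the target, each carrying an index-$1$ singular circle that realizes, fiberwise, a $1$-handle connected-sum operation between two sphere components, so that the total space becomes the connected sum of $l$ sphere bundles over $S^n$.

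For the ``if'' direction, I would decompose the image into the central disk ${D^n}_{1/2}$ together with closed annular neighborhoods of each singular circle $\partial {D^n}_j$. Over the central disk I take the trivial bundle with fiber $\sqcup_{l+1} S^{m-n}$, together with its projection, as the map. Over each annular neighborhood of an index-$1$ singular circle I take, via the radial trivialization of the annulus as $S^{n-1}\times[-1,1]$, the product of ${\rm id}_{S^{n-1}}$ with a Morse cobordism on an $(m-n+1)$-dimensional manifold having one index-$1$ critical point, which merges two sphere components into one via a $1$-handle attachment. Over the outermost annulus I use the analogous product with fiber $D^{m-n+1}$ and its radial Morse function to close off the last sphere component at the index-$0$ singular circle. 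The pieces are then glued along their fibered boundaries by fiberwise diffeomorphisms.

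To identify the resulting total space as the $l$-fold connected sum, I would observe that each block between two successive singular circles, together with a suitable cap $D^n\times S^{m-n}$ on one boundary, is diffeomorphic to an $S^{m-n}$-bundle over $S^n$ with a small open ball removed, by an argument directly parallel to the ``if'' part of Theorem \ref{thm:1}. Concatenating these blocks along the spherical boundaries then realizes the smooth connected sum of $l$ such bundles. The componentwise triviality of the monodromy and the prescribed preimage structure over each annular region follow directly from this construction.

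For the converse under the extra hypothesis $m\geq 2n$, I would argue in reverse: the componentwise trivial monodromies give each annular block a product structure $S^{n-1}\times W_j$, where $W_j$ is an elementary cobordism with a single index-$1$ critical point in dimension $m-n+1$. The hypothesis $m\geq 2n$ ensures that the attaching $0$-sphere of the unique $1$-handle in each $W_j$ admits a unique isotopy class of framings, so that each block is the standard $1$-handle cobordism fiberwise, and two successive blocks together assemble into a sphere bundle over $S^n$ minus a disk. The main obstacle is precisely this rigidity of the handle attaching data, which genuinely fails in low dimensions: in the case $(m,n)=(3,2)$ the attaching $0$-sphere sits in a $1$-manifold and there is not enough room to rigidify the gluing, which is why more general graph manifolds, as referenced in the introduction, admit such maps without being connected sums of sphere bundles.
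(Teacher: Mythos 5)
The paper itself does not prove Theorem \ref{thm:2}: it is quoted from \cite{kitazawa0.3,kitazawa0.5}, and the only construction actually carried out here is the $l=1$ case, namely the proof of Theorem \ref{thm:1}. Your ``if'' direction is the natural iteration of exactly that construction---a central trivial piece with $l+1$ sphere fibers, one product-type index-$1$ annulus per connected-sum operation, and an index-$0$ annulus capping off the last remaining sphere---so it is consistent with the paper's method and with how the cited references proceed. One point you should make explicit: the $l$ given bundles over $S^n$ are in general nontrivial, and their clutching functions must enter through the ``fiberwise diffeomorphisms'' you glue by. As written, all of your blocks are products, which would only realize connected sums of trivial bundles $S^n\times S^{m-n}$.

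On the converse, identifying $m\ge 2n$ as the source of rigidity is the right idea, but the phrase ``unique isotopy class of framings'' conflates two different pieces of attaching data. The fiberwise $1$-handle is attached along an $S^{n-1}$-family of pairs of points in the sphere fibers, i.e., a pair of sections of $S^{m-n}$-bundles over $S^{n-1}$, together with a normal framing. What $m\ge 2n$ buys is $n-1<m-n$, hence $\pi_{n-1}(S^{m-n})=0$, so the core sections can be isotoped to constant sections; the framings, governed by $\pi_{n-1}(SO(m-n))$, are generally \emph{not} unique, and this non-uniqueness is precisely what produces the various, possibly nontrivial, sphere bundles over $S^n$ as connected summands. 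If the framing were literally unique, the converse could only ever yield $\#_{l}(S^n\times S^{m-n})$. Your diagnosis of $(m,n)=(3,2)$ is correct in spirit: there the cores are sections $S^1\to S^1$, which can wind, so Seifert-fibered pieces rather than connected summands appear, consistent with Theorems \ref{thm:4} and \ref{thm:5}.
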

\begin{Def}
	We say round fold maps as in Theorem \ref{thm:2} are {\it directed}. We say ones whose singular sets are connected and which have globally trivial monodromies are also {\it directed}.
\end{Def}

We explain about directed round fold maps whose singular sets are connected.
For such maps, preimages of points in the interiors of the images are standard spheres. Furthermore, the index of singular points of the maps here are always $0$. Canonical projections of unit spheres into Euclidean spaces whose dimensions are at least $2$ satisfy the conditions.

\begin{Prop}
	\label{prop:3}
	In Theorem \ref{thm:2} consider the case $l=1$. Directed round fold maps have globally trivial monodromies in the case $(m,n)=(k+1,k),(4,2), (5,2), (5,3), (6,3), (6,4)$ for example where $k$ is an arbitrary integer greater than or equal to $2$.
\end{Prop}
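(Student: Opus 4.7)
The plan is to identify the obstruction to upgrading ``componentwisely trivial monodromies'' to ``globally trivial monodromy'' as a homotopy class in $\pi_{n-1}(\mathrm{Diff}(S^{m-n}))$, and to verify its vanishing in each listed case using standard results on diffeomorphism groups of low-dimensional spheres and disks.

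First, given a directed round fold map $f\colon M\to\mathbb{R}^n$ as in Theorem~\ref{thm:2} with $l=1$, I would analyze the bundle $f^{-1}(A^n_{\frac{1}{2},\frac{5}{2}})\to S^{n-1}$ obtained from radial projection. Its fiber $F$ is a compact $(m-n+1)$-manifold decomposing as $F=F_1\cup_{S^{m-n}}F_2$, where $F_1$ is a pair-of-pants-type cobordism from $S^{m-n}\sqcup S^{m-n}$ to $S^{m-n}$ across the inner index-$1$ singular component and $F_2\cong D^{m-n+1}$ is a disk cap across the outer index-$0$ singular component. The componentwise triviality of the monodromies provides bundle trivializations $\Phi_1$ and $\Phi_2$ of the $F_1$- and $F_2$-subbundles over $S^{n-1}$.

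Comparing $\Phi_1$ and $\Phi_2$ along the common $S^{m-n}$-bundle at radius $\frac{3}{2}$ yields a fiberwise self-diffeomorphism whose homotopy class $\alpha\in\pi_{n-1}(\mathrm{Diff}(S^{m-n}))$ (up to $\pi_1$-conjugation) is precisely the obstruction to global triviality. Because this automorphism extends over $F_2\cong D^{m-n+1}$ relative to its boundary trivialization, $\alpha$ lies in the image of the restriction map $\pi_{n-1}(\mathrm{Diff}(D^{m-n+1},\partial))\to\pi_{n-1}(\mathrm{Diff}(S^{m-n}))$. For the listed cases with $m-n\le 2$, namely $(k+1,k)$, $(4,2)$, $(5,3)$ and $(6,4)$, the group $\mathrm{Diff}(D^{m-n+1},\partial)$ is contractible, by Smale's theorem for $D^2$ and Hatcher's proof of the Smale conjecture for $D^3$, so $\alpha=0$ automatically. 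For $(6,3)$ the target group itself already vanishes in the needed degree: $\pi_2(\mathrm{Diff}(S^3))\cong\pi_2(O(4))=0$ by Hatcher's theorem $\mathrm{Diff}(S^3)\simeq O(4)$.

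The main obstacle will be the case $(m,n)=(5,2)$: here $\pi_1(\mathrm{Diff}(S^3))\cong\mathbb{Z}/2$ is non-trivial, and $\mathrm{Diff}(D^4,\partial)$ is not known to be contractible, so the disk-extension argument is inconclusive. For this case I would inspect the explicit construction from the proof of Theorem~\ref{thm:1} directly, and use the orientability of $M$ together with the preservation of fiber orientation by the radial Morse model, to verify that the representing free loop in $\mathrm{Diff}(S^3)$ arising from a directed round fold map lies in the identity component.
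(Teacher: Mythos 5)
Your decomposition is the same one the paper uses: split $f^{-1}({A^n}_{\frac{1}{2},\frac{5}{2}})$ into the inner piece containing the index-$1$ fold and the outer piece around the definite fold, which is a $D^{m-n+1}$-bundle over $S^{n-1}$, trivialize each using componentwise triviality, and study the discrepancy along the $S^{m-n}$-bundle at radius $\frac{3}{2}$. The gap is in how you treat that discrepancy. The class $\alpha\in\pi_{n-1}(\mathrm{Diff}(S^{m-n}))$ is \emph{not} ``precisely the obstruction to global triviality'': the bundle is globally trivial whenever the comparison family lifts through the restriction map $\mathrm{Diff}(D^{m-n+1})\rightarrow\mathrm{Diff}(S^{m-n})$, since one can then absorb $\alpha$ into a re-trivialization of the disk-bundle piece; moreover $\alpha$ is only well defined modulo classes extending over one of the two pieces, and it genuinely need not vanish. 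Already for $(m,n)=(3,2)$ — the case this paper actually needs — the fiber sphere is $S^1$ and $\pi_1(\mathrm{Diff}(S^1))\cong\pi_1(O(2))\cong\mathbb{Z}$; for the circle bundle over $S^2$ of Euler number $k$ the comparison family detects $k$, so your assertion that contractibility of $\mathrm{Diff}(D^2,\partial)$ forces $\alpha=0$ is false (the conclusion of global triviality is still true, but not for that reason). The same slip occurs for $(4,2)$, where $\pi_1(\mathrm{Diff}(S^2))\cong\mathbb{Z}/2\mathbb{Z}\neq 0$. Finally, for $(5,2)$ the class you must handle lives in $\pi_1(\mathrm{Diff}(S^3))\cong\mathbb{Z}/2\mathbb{Z}$, i.e.\ it concerns a loop based at the identity; your proposed fix of showing the free loop ``lies in the identity component'' addresses $\pi_0$ rather than $\pi_1$ and resolves nothing. (Also note that the restriction map from $\mathrm{Diff}(D^{m-n+1},\partial)$ to $\mathrm{Diff}(S^{m-n})$ is constant, so its image in $\pi_{n-1}$ is zero; you presumably meant the group without the boundary condition.)

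The missing idea — which is exactly the paper's stated ``most important ingredient'' — is that in every listed case $m-n\le 3$, so $\mathrm{Diff}(S^{m-n})\simeq O(m-n+1)$ (Smale for $S^2$, Hatcher for $S^3$, classical for $S^0$ and $S^1$), and the linear action of $O(m-n+1)$ on $D^{m-n+1}$ provides a homotopy section of the restriction map $\mathrm{Diff}(D^{m-n+1})\rightarrow\mathrm{Diff}(S^{m-n})$. Hence that map is surjective on all homotopy groups: every family of diffeomorphisms of the boundary sphere, in particular your comparison family, extends to a family of diffeomorphisms of the disk, and the discrepancy along the middle sphere bundle can always be pushed into the outer disk bundle whether or not $\alpha$ vanishes. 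This treats all the cases uniformly, disposes of $(5,2)$ without any appeal to the unknown homotopy type of $\mathrm{Diff}(D^4,\partial)$, and replaces the false claims of the form ``$\alpha=0$ automatically'' by the correct statement that $\alpha$ is always liftable.
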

\begin{proof}[A short exposition on Proposition \ref{prop:3}.]
	We abuse the notation $l+1:=2>0$ for the number of connected components of the singular set $S(f)$ for a directed round fold map $f$. 
	We consider the restriction of $f$ to $f^{-1}({A^n}_{l+\frac{1}{2},l+\frac{3}{2}})$. We can also compose this with the canonical projection to $S^{n-1}$ mapping each point $x \in {A^n}_{l+\frac{1}{2},l+\frac{3}{2}}$ to $\frac{1}{||x||}x \in S^{n-1}$. This bundle is a linear bundle whose fiber is the unit disk $D^{m-n+1}$ according to \cite{saeki2}. We can also say that in our case, this is also trivial as a linear bundle. Smooth bundles whose fibers are diffeomorphic to $S^{m-n}$ are in our case linear where in general this is not true of course. In our case, we can see that an isomorphism on the smooth (linear) bundle $f^{-1}(\partial {D^n}_{l+\frac{1}{2}})$ is always extended to an isomorphism on the smooth (linear) bundle over $f^{-1}({A^n}_{l+\frac{1}{2},l+\frac{3}{2}})$. This is a most important ingredient and this completes our exposition on Proposition \ref{prop:3}.
	
	For linear bundles related to our arguments here, consult \cite{milnorstasheff} again. \cite{hatcher1} is on the diffeomorphism groups of unit spheres and smooth bundles whose fibers are standard spheres.  
\end{proof}
\begin{Prop}
	\label{prop:4}
In the definition of directed round fold maps, we do not need the assumption that maps have componentwisely trivial monodromies in the case $(m,n)=(k+1,k)$ for any integer $k \geq 2$.
\end{Prop}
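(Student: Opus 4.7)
The plan is to show, for each connected component $\partial {D^n}_{l'}$ of $f(S(f))$ with $1 \leq l' \leq l$, that the smooth bundle over $S^{n-1}$ obtained by composing $f|_{f^{-1}({A^n}_{l'-\frac{1}{2},l'+\frac{1}{2}})}$ with the canonical radial projection $x \mapsto \frac{1}{||x||}x$ is globally trivial; establishing this for every $l'$ is exactly the componentwise trivial monodromies condition.

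First I would analyze the fiber of this bundle. Since $m-n+1=2$, it is a compact $2$-dimensional surface with boundary equipped with a Morse function (the radial coordinate $||\cdot||$) having exactly one critical point, whose Morse index equals the fold index $0$ or $1$. Comparing with the preimage counts in Theorem~\ref{thm:2} (disjoint unions of circles, since $m-n=1$), the fiber decomposes as the disjoint union of a distinguished component carrying the critical point --- a disk if $l'=l$ (index $0$) or a pair of pants if $l'<l$ (index $1$) --- together with several untouched annulus components $S^1 \times [0,1]$ disjoint from the critical locus.

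I would then trivialize each sub-bundle. For the distinguished sub-bundle, the section $S(f) \cap f^{-1}(\partial {D^n}_{l'}) \cong S^{n-1}$ admits a tubular neighborhood in $M$ with the linear structure dictated by the fold normal form of Definition~\ref{def:1}, essentially as in the reasoning sketched for Proposition~\ref{prop:3}, yielding a trivialization of this sub-bundle. For each untouched annulus sub-bundle, its core $S^1$-sub-bundle over $S^{n-1}$ extends radially inward: at each inner fold the normal form shows that an untouched circle crosses the fold with a local product structure, so the core $S^1$-sub-bundle extends smoothly through every inner singular image down to a smooth $S^1$-bundle over the contractible disk ${D^n}_1$. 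Triviality over ${D^n}_1$ pulls back outward to a trivialization of each annulus sub-bundle over $S^{n-1}$.

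The main obstacle is ensuring this inward--outward propagation across each fold genuinely preserves smooth bundle triviality of the untouched circle-sub-bundles; this is most delicate in dimensions where $S^1$-bundles over $S^{n-1}$ are not automatically trivial (for instance the case $n=3$, where $\pi_{n-1}(BSO(2)) \cong \mathbb{Z}$ a priori allows nontrivial $S^1$-bundles over $S^2$). Verification reduces to the product form of the fold singularity in Definition~\ref{def:1}: near each fold a neighborhood in $M$ splits as the product of a neighborhood of the distinguished critical locus with trivial smooth neighborhoods along the untouched circles, so the untouched $S^1$-sub-bundles cross every fold smoothly and with trivial splice. Combining all these trivializations for every $l'$ yields componentwise trivial monodromies.
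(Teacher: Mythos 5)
Your identification of the fiber as a disk or pair of pants together with several annuli is correct, and splitting the bundle into these sub-bundles is a reasonable start, but the central step for the annulus components does not work as written. You extend the core $S^1$-sub-bundle of an untouched annulus component radially inward to ${D^n}_1$, asserting that ``an untouched circle crosses the fold with a local product structure'' at each inner fold. A circle that is untouched at the fold over $\partial {D^n}_{l'}$ need not stay untouched further in: at an indefinite fold over $\partial {D^n}_{l''}$ with $l''<l'$ exactly one circle splits into two as the radius decreases, and nothing in the directed conditions prevents that circle from being the one you are following. There the fiberwise circle degenerates to a figure eight and the $S^1$-sub-bundle simply does not extend; continuing inward would require choosing one of the two branches consistently over $S^{n-1}$, and whether such a choice exists is governed by the monodromy of the inner pair-of-pants bundle --- precisely what is being proved. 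Your closing paragraph treats only circles untouched at each given fold, so the branching case is genuinely unaddressed. A related problem affects the distinguished sub-bundle: the fold normal form gives a \emph{linear} structure on a tubular neighborhood of $S(f) \cap f^{-1}(\partial {D^n}_{l'})$ (as in Proposition \ref{prop:3}), but linearity is not triviality --- for $n=3$ linear $D^2$-bundles over $S^2$ realize every Euler number, which is exactly the delicate point you flag and then dispose of by assertion (``trivial splice'') rather than by argument; moreover the pair of pants is not a neighborhood of the critical locus, so its bundle is not determined by that tubular neighborhood in the first place.

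A workable version of your idea runs in the opposite direction: over the contractible ${D^n}_{\frac{1}{2}}$ the preimage is a disjoint union of trivial circle bundles; propagate trivializations outward fold by fold, using at each stage that a disk, annulus or pair-of-pants bundle over $S^{n-1}$ whose restriction to the inner boundary circles has been trivialized is itself trivial (for the outermost definite fold this also kills the Euler number of the disk bundle over $S^2$ when $n=3$). That last step is where the real content lies, and it is what the paper invokes: the homotopy and isotopy classification of diffeomorphisms of these compact surfaces (Smale for the disk, Gramain--Earle--Eells type contractibility of the identity components for the annulus and the pair of pants, together with the mapping class group computations needed when $n=2$), with details deferred to \cite{kitazawasaeki2} and Proposition \ref{prop:6}. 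Your argument tries to avoid naming these facts but cannot actually do without them.
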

This is due to a well-known fact on the diffeomorphisms groups, or the so-called {\it  mapping class groups} of compact surfaces and some related arguments are important imgredients of \cite{kitazawasaeki2}. Proposition \ref{prop:6} is presented later as a well-known fact on the mapping class group and the diffeomorphism group of the torus $S^1 \times S^1$. This is also an important fact in our arguments.
	\section{On Main Theorems.}
	\subsection{Graph manifolds and round fold maps into ${\mathbb{R}}^2$ on them.}
	We explain about graph manifolds. Note that topological manifolds whose dimensions are at most $3$ are all smooth manifolds and for a fixed topological manifold here, the smooth manifolds are always mutually diffeomorphic. This is due to \cite{moise} for example.
	\begin{Def}
		\label{def:5}
	A {\it graph manifold} is a $3$-dimensional closed, connected and orientable manifold obtained from finitely many manifolds regarded as the total spaces of smooth bundles over compact and connected surfaces whose fibers are diffeomorphic to $S^1$ or {\it circle bundles} over the surfaces by gluing tori on the boundaries by diffeomorphisms.
	\end{Def}
	\begin{Thm}
		\label{thm:3}
	A $3$-dimensional closed, connected and orientable manifold admits a round fold map into ${\mathbb{R}}^2$ if and only if it is a graph manifold.
	\end{Thm}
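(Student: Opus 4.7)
My plan is to prove each direction separately by relating the round fold structure on $M$ to the gluing description of a graph manifold in Definition \ref{def:5}. For the ``only if'' direction, assume $f:M \to \mathbb{R}^2$ is a round fold map with singular value set $\bigcup_{j=1}^{l} \partial D^2_j$ and subdivide $\mathbb{R}^2$ into the inner disk $D^2_{1/2}$, the connecting regular annuli $A^2_{j+1/2,j+3/2}$, and the annular neighborhoods $A^2_{j-1/2,j+1/2}$ of the singular circles. Since $f$ is a proper submersion over each contractible regular region and each generic fibre is a disjoint union of circles, the preimage of $D^2_{1/2}$ is a disjoint union of solid tori $S^1 \times D^2$, and the preimage of each regular connecting annulus is a disjoint union of copies of $T^2 \times [0,1]$. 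For the preimage of an annular neighborhood of a singular circle I use the normal form of Definition \ref{def:1}: the index~$0$ model $(x_1,x_2,x_3)\mapsto(x_1,x_2^2+x_3^2)$ and the index~$1$ model $(x_1,x_2,x_3)\mapsto(x_1,x_2^2-x_3^2)$ both admit rotational symmetries in $(x_2,x_3)$ that lift the angular $S^1$ action on the target annulus; patching these endows the preimage with a circle bundle structure over a compact surface with boundary. Gluing all pieces along their torus boundaries then realizes $M$ as in Definition \ref{def:5}.

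For the ``if'' direction, assume $M = \bigcup_i B_i$ is a graph manifold decomposition where each $B_i$ is a smooth circle bundle over a compact surface $\Sigma_i$ glued to its neighbors along torus boundaries. On each $\Sigma_i$ I pick a Morse function $g_i:\Sigma_i\to [a_i,b_i]$ with integer critical values, compose with the bundle projection $B_i\to\Sigma_i$ to obtain a fold map $B_i\to [a_i,b_i]$, and then ``spin'' the target around an $S^1$ factor to land in an annular region $A^2_{a_i,b_i}\subset\mathbb{R}^2$. Ordering the indices so that the intervals $[a_i,b_i]$ are consecutive and matching the bundle gluings with the rotational identifications on the torus boundaries assembles a global round fold map $M\to\mathbb{R}^2$ whose singular image is a concentric family of circles.

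The hardest step is in the ``only if'' direction, specifically verifying that the preimage of an annular neighborhood of a singular circle is actually a circle bundle over a surface rather than merely a surface bundle over $S^1$. In full generality the angular monodromy around a singular circle need not be trivial, so before applying the local rotational symmetries one must subdivide the neighborhood into sub-pieces on which the monodromy is isotopic to a diffeomorphism respecting the Morse data on a radial fibre; only then does the lifted $S^1$ action make global sense and provide the desired circle bundle structure. In the ``if'' direction, the analogous obstacle is matching the gluing diffeomorphisms imposed by the graph manifold structure with those forced by a concentric round fold map, which I would handle using the flexibility of the mapping class group of the torus invoked in Proposition \ref{prop:4}.
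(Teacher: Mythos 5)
The paper itself gives no proof of Theorem \ref{thm:3}: it is imported from \cite{saeki3} (where the conclusion is only a fold map whose restriction to the singular set is an embedding) and from \cite{kitazawasaeki1} (for the genuinely round version), so there is no in-paper argument to measure yours against; your proposal must stand on its own. Your decomposition of the target into the inner disk, the regular annuli and the annular neighborhoods of the singular value circles is the right starting point, and the identification of the regular preimages with disjoint unions of solid tori and of copies of $T^2\times[0,1]$ is correct. But in the ``only if'' direction your treatment of the piece around a singular circle has a real gap. That piece is the mapping torus of a compact surface $F$ carrying a Morse function with one critical point ($F$ a disk for a definite fold, a pair of pants for an indefinite fold), and the angular monodromy is an isotopy class of a diffeomorphism of $F$ obtained by going once around the whole circle; subdividing the annular neighborhood into sub-pieces, as you propose, cannot alter or trivialize it. The statement you actually need is that the mapping class groups of the disk, the annulus and the pair of pants contain no pseudo-Anosov classes (they are generated by boundary Dehn twists and boundary permutations), so every such mapping torus is Seifert fibered and hence decomposes into circle bundles glued along tori. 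A globally lifted $S^1$-action, which your argument relies on, need not exist.

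The ``if'' direction is where the substance of the theorem lies, and your sketch does not supply it. The spinning construction on a trivialized piece $\Sigma_i\times S^1$ forces the restriction of the map to each boundary torus to be the projection to the fiber circle (the angular coordinate of the target annulus), so two adjacent pieces glue to a well-defined map only if the gluing diffeomorphism of $T^2$ intertwines these projections, i.e.\ preserves the fiber direction up to isotopy. The bundle isomorphisms ${\Phi}_j$ of Proposition \ref{prop:6} realize only the Dehn twists along the fiber, the matrices $\bigl(\begin{smallmatrix}1&0\\ j&1\end{smallmatrix}\bigr)$, which form a proper subgroup of the mapping class group of $T^2$; a general graph manifold gluing, for instance one exchanging fiber and section directions, is not of this form, so ``the flexibility of the mapping class group of the torus'' available in the paper does not resolve the mismatch. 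One must insert auxiliary pieces carrying additional fold circles to realize arbitrary gluing matrices, and one must also linearize the decomposition graph (inserting $T^2\times[0,1]$ pieces so that edges join only consecutive radial levels, with solid-torus pieces innermost and outermost) in order to make the singular value circles concentric rather than nested in an arbitrary forest pattern. These two constructions are precisely the technical core of \cite{saeki3} and \cite{kitazawasaeki1}, and Proposition \ref{prop:4}, which concerns componentwise triviality of monodromies of directed maps, does not provide them.
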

    Originally, in \cite{saeki3}, "a round fold map" is "a fold map into ${\mathbb{R}}^2$ such that the restriction to the singular set is an embedding".
    
    An $m$-dimensional {\it pair of pants} is a smooth manifold diffeomorphic to one obtained by removing three disjointly and smoothly embedded copies of the $m$-dimensional unit sphere from an $m$-dimensional standard sphere.
    \begin{Prop}
    	\label{prop:5}
    	In Definition \ref{def:5}, we can choose each bundle as a trivial bundle over the unit disk $D^2$ or a $2$-dimensional pair of pants.
    	We can also define a graph satisfying the following rules.
    	\begin{enumerate}
    		\item The vertex set is the set of all these circle bundles over the surfaces.
    		\item The edge set is the set of all tori of connected components of the boundaries of these circle bundles over the surfaces.
    		\item Each edge contains exactly two vertices and they are distinct. They are the circle bundles containing the edge or the torus as a connected component of the boundaries.    	\end{enumerate} 
    \end{Prop}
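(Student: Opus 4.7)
The plan is to refine the decomposition from Definition \ref{def:5} via a pants decomposition of each base surface, and then to verify that the resulting pieces are trivial bundles over disks or pairs of pants and to read off the graph structure.

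First, fix a decomposition $M = \bigcup_i E_i / \sim$ as in Definition \ref{def:5}, where $\pi_i \colon E_i \to \Sigma_i$ is a circle bundle over a compact, connected, orientable surface with torus boundary. By a classical result on compact surfaces, each $\Sigma_i$ can be cut along a finite disjoint collection of simple closed curves into pieces, each homeomorphic to either $D^2$ or the $2$-dimensional pair of pants; when $\Sigma_i$ is neither a disk nor a pair of pants to begin with, one first produces a standard pants decomposition of $\Sigma_i$ and then inserts null-homotopic cuts to split any residual annulus piece as a disk together with a pair of pants. Each cut curve $c \subset \Sigma_i$ lifts to an embedded torus $T_c := \pi_i^{-1}(c) \subset E_i$, since the restriction of a circle bundle to a circle is a torus (here we use that each $E_i$ can be chosen orientable, which follows from the orientability of $M$). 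Cutting $E_i$ along all these tori yields pieces $E_{i,j} := \pi_i^{-1}(\Sigma_{i,j})$, each a circle bundle over a disk or a pair of pants.

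Next, each $E_{i,j}$ is a trivial bundle. Indeed, its base $\Sigma_{i,j}$ deformation retracts to a CW complex of dimension at most $1$, so $H^2(\Sigma_{i,j}; \mathbb{Z}) = 0$. Oriented circle bundles over a base $B$ are classified by their Euler class in $H^2(B; \mathbb{Z})$; since this group vanishes, every such bundle is trivial, and a consistent orientation of the fibers exists because $M$ and each $\Sigma_i$ are orientable.

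With the refinement in hand, I define the graph as stated: vertices are the pieces $E_{i,j}$, and edges are the gluing tori, which are the tori of the original decomposition of Definition \ref{def:5} together with the newly introduced $T_c$. Each torus appears as a boundary component of the pieces lying on its two sides, supplying the two-vertex incidence required by rule (3). The expected main obstacle is arranging the two endpoints of every edge to be distinct pieces: this can a priori fail when a cut curve is non-separating in its piece of $\Sigma_i$, in which case both sides of $T_c$ end up in the same $E_{i,j}$. I would address this by further subdividing the offending piece until the two paired boundary tori lie in different pieces, with particular care required in low-complexity situations such as $\Sigma_i \cong T^2$.
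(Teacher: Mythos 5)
The paper does not actually prove Proposition~\ref{prop:5}: it treats the statement as a known structural fact about graph manifolds and defers to \cite{neumann}, \cite{saeki1} and \cite{kitazawasaeki1} (``we do not need deep understanding on these graphs in our paper''). Your argument is therefore a genuine addition rather than a paraphrase, and it is the standard one: refine each base surface by a pants-type decomposition, lift the cut curves to tori, and kill the Euler class over pieces that retract to a $1$-complex. The triviality step is correct as you state it, and your diagnosis of the one real combinatorial obstacle --- a cut curve whose two copies land in the same piece, producing a loop edge --- is accurate; the fix you sketch (insert a parallel copy of the offending curve and split the resulting annulus by a null-homotopic circle into a disk and a pair of pants) does work, including for the closed bases $T^2$ and the once-punctured torus, so I would just ask you to write that paragraph out rather than gesture at it.

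The one substantive gap is your opening assumption that each $\Sigma_i$ is orientable. Definition~\ref{def:5} only requires the base surfaces to be compact and connected, and an orientable total space does not force an orientable base: a circle bundle over a M\"obius band whose fiber orientation reverses along the core has orientable total space, and such pieces are permitted by the definition as written. For these, your two key steps both break: the restriction of the bundle to a one-sided curve need not be a torus, and the Euler-class classification you invoke presupposes an oriented bundle. You need a preliminary reduction showing that any such piece can be re-decomposed (or re-fibered, via its alternative Seifert fibration over a disk) into circle bundles over orientable subsurfaces glued along tori; this is exactly the part of the plumbing calculus of \cite{neumann} that your proof currently bypasses by fiat. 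With that reduction supplied, and the loop-edge repair made explicit, the proof is complete.
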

Related to Proposition \ref{prop:5}, we explain about graphs associated with graph manifolds shortly where we do not need deep understanding on these graphs in our paper. 
We call a graph in Proposition \ref{prop:5} a {\it representation graph} for the graph manifold $M$. It is not unique. In our studies, representation graphs of a certain type or the so-called {\it plumbing type} are important. This is for decompositions into circle bundles where connected components of boundaries are glued in some specific ways. See \cite{neumann, saeki1} and see also \cite{kitazawasaeki1}. 
For each graph manifold, graphs with several labels are defined in \cite{neumann}. The definition of such a graph with labels is more complicated. As a strong result, we can define a {\it normal form} in such graphs. This also gives an invariant for graph manifolds.

The following theorem shows that both these graphs are important in characterizing a same subclass of graph manifolds.

    \begin{Thm}[\cite{kitazawasaeki1}]
    	\label{thm:4}
    	\begin{enumerate}
    		\item A graph manifold admits a directed round fold map into ${\mathbb{R}}^2$ if and only if its normal form is a graph with no cycles. 
    	\item Equivalently, a graph manifold admits a directed round fold map into ${\mathbb{R}}^2$ if and only if there exists a representation graph with no cycles for the manifold. 
    	\end{enumerate}
    \end{Thm}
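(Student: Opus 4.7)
The plan is to translate the existence of a directed round fold map $f:M\to\mathbb{R}^2$ into a decomposition of $M$ along disjoint tori that is visibly cycle-free, and then to recognize this decomposition as a representation graph in the sense of Proposition \ref{prop:5} and, after that, as Neumann's normal form.

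First I would analyze the preimage structure of a directed round fold map in dimensions $(m,n)=(3,2)$. Using Theorem \ref{thm:2} (with componentwisely trivial monodromies, which for $(3,2)$ are automatic by Proposition \ref{prop:4}), the manifold $M$ splits as $f^{-1}({D^2}_1)\cup f^{-1}({A^2}_{1,2})\cup\cdots\cup f^{-1}({A^2}_{l-1,l})$ glued along tori. Since fibers over interior points of each annular region are disjoint unions of standard circles, each piece is a circle bundle over a compact surface with boundary, and Proposition \ref{prop:5} lets us refine each piece further into circle bundles over disks or pairs of pants. The crucial point is that the combinatorics of the merges and births at the concentric singular circles is radial: the singular circles are totally ordered by $||f(\cdot)||\in\{1,2,\dots,l\}$, so each piece only shares tori with its immediate neighbors in the radial ordering. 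Thus the representation graph obtained from this decomposition has edges only between radially adjacent vertices, and this is acyclic because the radial coordinate strictly increases along any path.

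Conversely, starting from a representation graph without cycles, I would construct a directed round fold map by radially embedding the tree into $\mathbb{R}^2$. Choose a root vertex, and distribute the vertices of the tree at radii $r=1,2,\ldots$ according to their depth. Over each concentric disk/annulus, place the prescribed circle bundle: a trivial $D^2\times S^1$ piece over the innermost disk for leaves, and pair-of-pants bundles over the annuli recording the branching of the tree. At each singular circle at integer radius, use the standard fold model realizing the birth or merge of fiber circles (as in the index-$0$ or index-$1$ product of a Morse function and the identity), which is compatible with the tree's local branching data because Proposition \ref{prop:6}-type statements for the torus allow any required monodromy on the boundary tori to be absorbed. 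Gluing these radial pieces produces a smooth map whose singular image is the prescribed family of concentric circles and whose combinatorial data matches the tree, hence a directed round fold map.

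Finally, for (1), I would translate between representation graphs and Neumann's normal form using the calculus of \cite{neumann}. A representation graph is already of plumbing type after a small adjustment, and the reduction to normal form preserves the property of being acyclic (the moves in Neumann's calculus never create or destroy cycles beyond what is detectable from the plumbing presentation). Therefore the two characterizations in the theorem are equivalent. The main obstacle I expect is precisely this last step: controlling the normal form under Neumann's reduction moves and verifying carefully that an acyclic representation graph yields an acyclic normal form and vice versa; the rest of the argument is a fairly direct translation of the radial decomposition of a directed round fold map into a tree of circle bundles.
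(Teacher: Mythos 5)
First, note that the paper you are working from does not actually prove Theorem \ref{thm:4}: it is quoted from \cite{kitazawasaeki1} without proof, so your proposal can only be judged on its own terms. Your overall architecture (radial decomposition of a directed round fold map into circle bundles over disks and pairs of pants, a converse construction by radially embedding a tree, and a final translation to Neumann's normal form) is the natural one and is surely close in spirit to the cited proof. However, there are concrete gaps. The most local one is your acyclicity argument in the forward direction: a graph whose edges join only radially adjacent layers can perfectly well contain cycles (a double edge between two adjacent pieces, or a four-cycle alternating between radii $r$ and $r+1$), so ``the radial coordinate strictly increases along any path'' does not prove the graph is a tree. What actually forces a tree is the directedness condition in Theorem \ref{thm:2}: the number of fiber circles drops by exactly one at each singular value circle, so each pair-of-pants piece has two boundary tori facing inward and exactly one facing outward, every vertex except the outermost has exactly one outward edge, and a connected graph with $E=V-1$ is a tree. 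This is fixable, but it is the step where directedness is genuinely used and it should not be elided.

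The two more serious gaps are in the converse and in the normal-form translation. For the converse, a tree-shaped representation graph of a graph manifold comes with gluing diffeomorphisms of the boundary tori that a priori range over the full mapping class group of $T^2$, whereas the pieces produced by a round fold map are glued in a restricted way: Proposition \ref{prop:6} only describes fiber-preserving bundle isomorphisms (Dehn twists along the fiber), and it does not let you ``absorb'' a gluing that interchanges fiber and base directions. Showing that every tree graph manifold can be re-presented so that all gluings are of the realizable type (for instance, pushing the nontrivial identifications out to leaf solid tori, where the index-$0$ death circle supplies an arbitrary meridian) is the real content of the implication and is missing. Finally, for the equivalence with the normal form, your assertion that Neumann's moves ``never create or destroy cycles'' is not correct as stated: handle-absorption moves trade cycles of the graph for genus of a vertex, and conversely an acyclic normal form with a positive-genus vertex forces cycles in any pair-of-pants representation graph. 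So the equivalence of (1) and (2) requires tracking genus as well as cycles, and you have only flagged this step rather than carried it out.
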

\begin{Thm}[\cite{doighorn,kitazawasaeki1}]
	\label{thm:5}
	For manifolds in the previous theorem, the cup product for any ordered pair of elements of the $1$st rational cohomology classes is always the zero element.  
\end{Thm}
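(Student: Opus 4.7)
The approach combines the structural reduction of Theorem \ref{thm:4} with Poincar\'e duality and a geometric disjointification of $2$-cycles. First, I would invoke Theorem \ref{thm:4} to write $M = V_1 \cup \cdots \cup V_k$, where each piece $V_i$ is a circle bundle over $D^2$ or over a pair of pants (by Proposition \ref{prop:5}), and the $V_i$ are glued to adjacent pieces along boundary tori according to a tree pattern with no cycles.

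Next, since $M$ is a closed orientable $3$-manifold, Poincar\'e duality gives isomorphisms $H^1(M;\mathbb{Q}) \cong H_2(M;\mathbb{Q})$ and $H^2(M;\mathbb{Q}) \cong H_1(M;\mathbb{Q})$, under which the cup product $H^1 \otimes H^1 \to H^2$ corresponds, up to sign, to the transverse intersection pairing $H_2 \otimes H_2 \to H_1$. It therefore suffices to show that any two rational $2$-classes in $M$ have vanishing intersection class in $H_1(M;\mathbb{Q})$.

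I would then analyze $H_2(M;\mathbb{Q})$ via iterated Mayer--Vietoris along the tree decomposition, arguing that every rational $2$-class is representable by a combination of embedded surfaces of two kinds: ``horizontal'' tori of the form $\gamma \times S^1$ inside a single pair-of-pants piece $P \times S^1$, where $\gamma$ is a boundary-parallel simple closed curve on $P$; and ``capping'' spheres (or more general horizontal closed surfaces) formed by joining meridian disks from solid torus pieces across the separating tori. Granted this representation, a general-position argument disjointifies any two representatives: surfaces in distinct pieces already lie in disjoint regions of the decomposition; two horizontal tori in a common $P \times S^1$ can be realized using disjoint boundary-parallel curves of $P$; and capping spheres can be pushed off each other using the fiber direction of an adjacent solid torus. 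Disjoint $2$-cycles have empty intersection, so the intersection class in $H_1(M;\mathbb{Q})$ is zero, and bilinearity finishes the argument.

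The main obstacle is the Mayer--Vietoris bookkeeping in the analysis of $H_2(M;\mathbb{Q})$: one must verify that the tree hypothesis rules out any ``global'' $2$-class that threads around a loop of the graph. Such global cycles do exist for graph manifolds whose representation graph contains a loop---as in $T^3$, whose three coordinate tori intersect transversely and produce a nonzero cup product on $H^1(T^3;\mathbb{Q})$---and it is precisely the tree condition supplied by Theorem \ref{thm:4} that excludes them. Once this structural point is secured, the disjointification step reduces to a routine general-position argument, and the conclusion of Theorem \ref{thm:5} follows.
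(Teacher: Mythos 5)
Your overall strategy --- Poincar\'e duality turning the cup product $H^1\otimes H^1\to H^2$ into the intersection pairing $H_2\otimes H_2\to H_1$, followed by a search for disjoint surface representatives of $2$-classes --- is the right one, and it is also the mechanism behind the paper's stronger integral statement, Main Theorem \ref{mthm:1}. Note first that the paper does not reprove Theorem \ref{thm:5} directly: it quotes it from \cite{doighorn} via the characterization in Theorem \ref{thm:4} (see also \cite{kitazawasaeki1}), and the new work goes into Lemma \ref{lem:1}, an induction on the number of connected components of the singular set of the round fold map, which shows that $H_2(M;\mathbb{Z})$ has a basis represented by \emph{pairwise disjoint embedded $2$-spheres}. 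Once that is established, the vanishing of all intersections is immediate: an embedded sphere in an orientable $3$-manifold has trivial normal bundle, so it can be pushed off itself, and distinct basis spheres are already disjoint.

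The genuine gap in your sketch is the claim that, once the Mayer--Vietoris bookkeeping is done, ``the disjointification step reduces to a routine general-position argument.'' General position only makes two embedded surfaces in a $3$-manifold intersect transversely in a $1$-manifold; it never makes them disjoint, and your own example $T^3$ shows that horizontal surfaces threading through the pieces of a graph manifold can have homologically essential intersection. So the tree hypothesis must enter the disjointification itself, not only the computation of $H_2(M;\mathbb{Q})$. Two further points. First, in the tree case your ``horizontal tori'' $\gamma\times S^1$ with $\gamma$ boundary-parallel are isotopic to edge tori, which separate $M$ and are therefore null-homologous; they contribute nothing, and all of $H_2(M;\mathbb{Q})$ is carried by your ``capping'' surfaces. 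Second, two capping surfaces passing through a common Seifert piece are (multi-)sections there, and the obstruction to separating them is a difference class in $H^1$ of the base whose vanishing is exactly what has to be proved; this is where Lemma \ref{lem:1} does real work (Case 2 of its proof uses the rational homology of circle bundles over $S^2$ to force the extra generator to be a section of an $S^2\times S^1$ piece, i.e.\ a sphere). Until you show that your representatives can actually be chosen disjoint --- for instance by upgrading them to spheres as in Lemma \ref{lem:1}, or by computing the intersection classes directly as in \cite{doighorn} --- the argument is not complete.
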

In \cite{kitazawasaeki1}, the fact that graph manifolds admitting directed fold maps into ${\mathbb{R}}^2$ enjoy such a property on the rational cohomology rings is presented as a corollary to a main result of \cite{doighorn}. The result of \cite{doighorn} states that graph manifolds with normal forms having no cycles enjoy this property.
Main Theorem \ref{mthm:1} is regarded as a stronger version. 
\subsection{Proofs of Main Theorems.}
We need some notions on polyhedra and so-called {\it Reeb spaces}.

A topological space which is homeomorphic to a polyhedron whose dimension is at most $2$ has the structure of a polyhedron uniquely. A topological space which is homeomorphic to a topological manifold whose dimension is at most $3$ has the structure of a polyhedron uniquely and this is the PL manifold. This is also due to \cite{moise} for example.

	For a continuous map $c:X \rightarrow Y$ between topological spaces, we can define an equivalence relation ${\sim}_c$ on $X$ by the following rule: $x_1 {\sim}_c x_2$ if and only if $x_1$ and $x_2$ are in a same connected component of the preimage $c^{-1}(y)$ of some point $y$.
	
We do not explain about general theory of Reeb spaces precisely. One of important fact is that for fold maps and more general smooth maps enjoying some properties on so-called "genericity", they are polyhedra whose dimensions are same as those of the manifolds of the targets and whose structures as the polyhedra are naturally induced from the manifolds of the targets. Such facts are shown in \cite{shiota} for example. \cite{kobayashisaeki} explicitly shows such a fact for so-called {\it stable} maps on smooth closed manifolds whose dimensions are at least $3$ into surfaces with no boundaries. Essentially the class of such maps there contains round fold maps and fold maps such that the restrictions to the singular sets are embeddings for example. See \cite{golubitskyguillemin} for related singularity theory of smooth maps. 

For such polyhedra, see also \cite{turaev} for example. This is a paper published before studies of such polyhedra regarded as the Reeb spaces of such smooth maps into ${\mathbb{R}}^2$ started. For related studies, see \cite{costantinothurston,ishikawakoda}.

The following example is important.

It is well-known that homotopy spheres except $4$-dimensional exotic spheres are all PL homeomorphic to standard spheres where they are seen as the PL manifolds. We call such PL manifolds {\it PL spheres}.

\begin{Ex}
	For example, for directed round fold maps on $m$-dimensional closed and connected manifolds into ${\mathbb{R}}^n$ with $m>n$, the Reeb spaces are simple homotopy equivalent to the bouquet of $n$-dimensional (PL) spheres. 
	FIGURE \ref{fig:1} is for Theorem \ref{thm:1}. This is obtained by attaching a copy ${{D^n}_0}^{\prime}$ of the $n$-dimensional unit sphere ${D^n}_{0}$ to another copy via a diffeomorphism from the boundary onto a smoothly embedded copy of the unit sphere $S^{n-1}$ in the interior of ${D^n}_{0}$. FIGURE \ref{fig:2} is for a general case. Note that topologically we can represent the Reeb space as a one embedded naturally into ${\mathbb{R}}^{n+1}$ and this is the subspace of the Reeb space in some hyperplane of ${\mathbb{R}}^{n+1}$.  
\end{Ex}
\begin{figure}
	
\includegraphics[height=25mm, width=40mm]{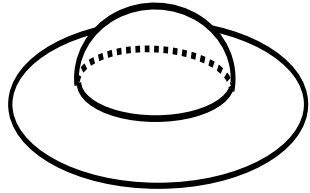}
\begin{center}
\caption{The Reeb space for Theorem \ref{thm:1}.}
\label{fig:1}
\end{center}
\end{figure}
\begin{figure}

	\includegraphics[height=25mm, width=40mm]{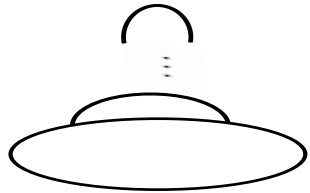}
	\begin{center}
	\caption{The Reeb space of a general directed fold map. The subspace of it in a suitable hyperplane of ${\mathbb{R}}^{n+1}$}
		\label{fig:2}
	\end{center}
\end{figure}
Let $A$ be a commutative ring with a generator $\pm a \in A$.
For a compact, connected and oriented manifold $X$, we can define the fundamental class as a generator of $H_{\dim X}(X,\partial X;A)$, isomorphic to $A$. We can set this element as $\pm a$ according to the orientation.

For a smooth manifold $Y$ and an element $c \in H_j(Y,\partial Y,A)$, assume that $c$ is equal to the value ${i_X}_{\ast}(a)$ of the homomorphism ${i_{X}}_{\ast}:H_{\dim X}(X;\partial X;A) \rightarrow H_{j}(Y;\partial Y;A)=H_{\dim X}(Y;\partial Y;A)$ induced canonically by some smooth embedding $i_X:X \rightarrow Y$ of a smooth, compact, connected and oriented manifold $X$ enjoying the following properties.
\begin{itemize}
	\item $i_X(\partial X) \subset \partial Y$.
	\item $i_X({\rm Int}\ X) \subset {\rm Int}\ Y$.
\end{itemize}
In other words, they are embedded properly. 
Respecting fundamental arguments on differential topology, we may add so-called "transversality" of the embedding on the boundaries according to the situations.
In other words, at each point $p$ in the boundary $\partial X$, the intersection of the image of the differential ${d_{i_X}}_p$ of the embedding and the tangent space $T_{i_X(p)}Y$ of $Y$ at $i_X(p)$ is assumed to be of dimension $\dim X+(\dim Y-1)-\dim Y=\dim X-1$. 
However, we do not consider this assumption essentially in our paper.
In this situation, $c$ is said to be {\it represented by} the submanifold {\it $i_X(X)$}.  

We can define similar notions in the PL category, or equivalently, the piecewise smooth category, and the topology category.


Consider a compact, connected and oriented manifold $X$ again. In our arguments, so-called Poincar\'e duals to elements of $H_j(X;\partial X;A)$, $H^j(X;\partial X;A)$. $H_j(X;A)$ and $H^j(X;A)$ are important. They are uniquely defined as elements of $H^{\dim X-j}(X;A)$, $H_{\dim X-j}(X;A)$. $H^{\dim X-j}(X,\partial X;A)$ and $H_{\dim X-j}(X,\partial X;A)$, respectively. Related to this,
Poincar\'e duality theorem for $X$ is also important. 

Of course we calculate (co)homology groups and cohomology rings. For this, we need exact sequences such as Mayer-Vietoris sequences. We also need some theorems such as K\"unneth formula, useful for the cohomology groups (rings) of products. Poincar\'e duality theorem, presented before, and universal coefficient theorem are important. See \cite{hatcher1} again for example. 

We show Main Theorems. 
First, the following proposition is fundamental in our arguments. 

\begin{Prop}
\label{prop:6}
For a trivial circle bundle $T^2:=S^1 \times S^1$ over $S^1$, there exists a family of isomorphisms on the bundle, which can be denoted by ${\{{\Phi}_j\}}_{j \in \mathbb{Z}}$ and enjoy the following properties.
	\begin{enumerate}
		\item Let ${S^1}_{\rm b}$ denote the subspace $S^1 \times \{\ast\} \subset S^1 \times S^1$ with an orientation. Let ${S^1}_{\rm f}$ denote the subspace $\{\ast\} \times S^1 \subset S^1 \times S^1$ with an orientation, which is regarded as a fiber of the trivial circle bundle over ${S^1}_{\rm b}$. Let $[S_{\rm b}] \in H_1(T^2;\mathbb{Z}) \oplus \mathbb{Z} \oplus \mathbb{Z}$ and $[S_{\rm f}] \in H_1(T^2;\mathbb{Z}) \oplus \mathbb{Z} \oplus \mathbb{Z}$ denote the elements represented by these oriented circles.
		The homomorphism ${{\Phi}_j}_{\ast}$ induced from ${\Phi}_j$, which is also an isomorphism, maps $[S_{\rm b}]$ to $[S_{\rm b}]+j[S_{\rm f}]$ and $[S_{\rm f}]$ to $[S_{\rm f}]$.
		\item 
		Any isomorphism on the bundle is smoothly isotopic to exactly one ${\Phi}_j$ and we can smoothly isotope in the space of isomorphisms on the bundle.
	\end{enumerate}
\end{Prop}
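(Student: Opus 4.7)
The plan is to exhibit the $\Phi_j$ explicitly, then show via a standard analysis of $\mathrm{Diff}^+(S^1)$ that they exhaust the isotopy classes of orientation-preserving bundle automorphisms (the natural class given that both $S^1_{\rm b}$ and $S^1_{\rm f}$ are oriented). In coordinates $S^1=\mathbb{R}/\mathbb{Z}$ I set $\Phi_j(t,s):=(t,\,s+jt)$; this is well-defined on $T^2$ because replacing $t$ by $t+1$ changes the second coordinate by the integer $j$. Each $\Phi_j$ is a smooth bundle automorphism covering $\mathrm{id}_{S^1}$, with smooth inverse $\Phi_{-j}$. Property (1) is then immediate: the base circle $\{(t,0):t\in S^1\}$ is carried to the $(1,j)$-curve $\{(t,jt):t\in S^1\}$, whose class is $[S_{\rm b}]+j[S_{\rm f}]$, while each fiber $\{t_0\}\times S^1$ is preserved setwise by the rotation $s\mapsto s+jt_0$, giving $[S_{\rm f}]\mapsto[S_{\rm f}]$.

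For (2), let $\Phi$ be any orientation-preserving bundle automorphism. It covers some $\bar\Phi\in\mathrm{Diff}^+(S^1)$. Since $\mathrm{Diff}^+(S^1)$ is path-connected, I choose a smooth isotopy $K_u$ from $\mathrm{id}_{S^1}$ to $\bar\Phi^{-1}$; then $F_u(t,s):=\Phi(K_u(t),s)$ is an isotopy through bundle automorphisms from $\Phi$ to a map covering the identity on the base. Hence I may assume $\Phi(t,s)=(t,\phi_t(s))$, where $t\mapsto\phi_t$ is a smooth loop in $\mathrm{Diff}^+(S^1)$.

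Now I invoke the classical fact that $\mathrm{Diff}^+(S^1)$ deformation retracts onto the rotation subgroup $SO(2)\cong S^1$: for $\phi\in\mathrm{Diff}^+(S^1)$, a canonical smooth isotopy $\phi\leadsto R_{\phi(0)}$ is obtained by convex-combining, in the universal cover $\mathbb{R}$, the lift of $R_{-\phi(0)}\circ\phi$ (which fixes $0$) with the identity. Applying this fiberwise, smoothly in $t$, produces an isotopy of bundle automorphisms from $\Phi$ to $(t,s)\mapsto(t,\,s+\alpha(t))$ where $\alpha(t):=\phi_t(0)$. The integer $j:=\deg\alpha\in\pi_1(S^1)=\mathbb{Z}$ is well-defined, and a straight-line homotopy in the universal cover from $\alpha$ to the linear loop $t\mapsto jt$ gives a further bundle isotopy to $\Phi_j$. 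Uniqueness is then immediate: isotopic bundle automorphisms induce the same map on $H_1(T^2;\mathbb{Z})$, while the $\Phi_j$ act distinctly on $[S_{\rm b}]$ by part (1).

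The essential content is the deformation retraction $\mathrm{Diff}^+(S^1)\simeq SO(2)$ and the resulting $\pi_1(\mathrm{Diff}^+(S^1))=\mathbb{Z}$; both are classical. What requires a bit of care is promoting the homotopies of loops in $\mathrm{Diff}^+(S^1)$ to genuine smooth isotopies of bundle automorphisms of $T^2$, but this reduces to standard bookkeeping with convex combinations of lifts in the universal cover, so I do not expect any real obstruction.
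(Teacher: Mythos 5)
Your proof is correct, and it supplies an argument where the paper deliberately gives none: the text states ``We omit rigorous exposition on this'' and defers to classical facts on linear bundles and the mapping class group of the torus (citing Milnor--Stasheff and Hatcher). Your route --- exhibiting $\Phi_j(t,s)=(t,s+jt)$ explicitly, reducing to automorphisms covering $\mathrm{id}_{S^1}$ via path-connectedness of $\mathrm{Diff}^+(S^1)$, and then using the fiberwise deformation retraction of $\mathrm{Diff}^+(S^1)$ onto $SO(2)$ together with $\pi_1(SO(2))=\mathbb{Z}$ --- is the standard self-contained proof, and the convex-combination-of-lifts device does make all the homotopies into genuine smooth isotopies through bundle automorphisms, as you claim. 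One caveat is worth making explicit, and you only half-state it: as literally written, ``any isomorphism on the bundle'' is false, since for instance $(t,s)\mapsto(t,-s)$ is a bundle automorphism sending $[S_{\rm f}]$ to $-[S_{\rm f}]$ and hence is isotopic to no $\Phi_j$; similarly for base-orientation-reversing maps. The proposition must be read with the structure group taken to be $SO(2)$ (or with isomorphisms required to preserve the orientations of fiber and base), which is consistent with how the paper uses $\Phi_k$ later to build circle bundles of Euler number $k$, and your restriction to $\mathrm{Diff}^+$ in both factors is exactly the needed hypothesis --- you should just say so as an assumption rather than an aside. With that reading, your classification (the group of isotopy classes is $\mathbb{Z}$, detected by the action on $[S_{\rm b}]$) is complete and the uniqueness argument via the induced map on $H_1(T^2;\mathbb{Z})$ is airtight.
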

We omit rigorous exposition on this. 
We also need closely related arguments in our paper. For these arguments, we use fundamental knowledge and methods on linear bundles including circle bundles. When we need, see \cite{milnorstasheff} again for example. Although terminologies and notation may be a bit different from ours, we can consult this, explaining about the essentially same content.

For Main Theorem \ref{mthm:1}, the following lemma is essential.
\begin{Lem}
\label{lem:1}
Assume that a $3$-dimensional closed, connected and orientable manifold $M$ admits a directed round fold map $f:M \rightarrow {\mathbb{R}}^2$. Then we have the following properties.
\begin{enumerate}
\item
\label{lem:1.1}
 The 2nd integral homology group $H_2(M;\mathbb{Z})$ is free and generated by finitely many elements which are not divisible by integers greater than $1$. 

\item \label{lem:1.2} Furthermore, the set of the finitely many elements in $H_2(M;\mathbb{Z})$ can be taken as its basis. In addition, these elements can be taken as ones represented by spheres which are mutually disjoint.
Let $\{{S^2}_j\}$ denote the set of all spheres here.
\item \label{lem:1.3} We can consider the quotient map $q_f:M \rightarrow W_f$ onto the Reeb space $W_f$ of $f$. Then $q_f$ maps ${S^2}_j$ onto a PL sphere by a {\rm (}PL{\rm )} homeomorphism. Furthermore, the
 homomorphism ${q_f}_{\ast}:H_2(M;\mathbb{Z}) \rightarrow H_2(W_f;\mathbb{Z})$ induced canonically by the map $q_f$ is a monomorphism and maps each element represented by ${S^2}_j$ to an element which is not divisible by integers greater than $1$. 
\item \label{lem:1.4} The subgroup of 1st integral homology group $H_1(M;\mathbb{Z})$ generated by all elements whose orders are infinite is generated by finitely many elements which are not divisible by integers greater than $1$. 
\item \label{lem:1.5} Furthermore, the set of the finitely many elements in $H_1(M;\mathbb{Z})$ can be taken as a basis of the subgroup of the 1st integral homology group $H_1(M;\mathbb{Z})$ generated by all elements whose orders are infinite. In addition, these elements are represented by connected components of the preimage $f^{-1}(0)$.
\end{enumerate} 

	\end{Lem}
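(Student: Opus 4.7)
The plan is to exploit the explicit structure of the Reeb space $W_f$ together with the quotient map $q_f: M \to W_f$. First I would decompose $M$ using preimages of concentric circles in ${\mathbb{R}}^2$: the central region $f^{-1}({D^2}_{1/2})$ is a disjoint union of $N$ solid tori $\sqcup_{i=1}^{N}(S^1 \times D^2)$, while over each annular region of ${\mathbb{R}}^2 \setminus f(S(f))$ the restriction of $f$ is a trivial $S^1$-bundle by Propositions \ref{prop:3} and \ref{prop:4}. Following the Example, the Reeb space $W_f$ is a $2$-polyhedron obtained by gluing $2$-disks along branching circles at each index-$1$ fold singularity and capping off at each index-$0$ singularity; it is simple-homotopy equivalent to a bouquet $\bigvee_{j=1}^{k} S^2$, so $H_2(W_f;\mathbb{Z}) \cong \mathbb{Z}^k$ and $H_1(W_f;\mathbb{Z}) = 0$.

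Second, for each generating $2$-sphere $\Sigma_j \subset W_f$ I would construct an embedded PL sphere ${S^2}_j \subset M$ realized as a section of $q_f$ over $\Sigma_j$. On each individual disk piece of $\Sigma_j$ the bundle $q_f$ is a trivial $S^1$-bundle and admits a section; the sections over disks meeting along a common branching circle can be matched using the local normal form $(x_1, x_2, x_3) \mapsto (x_1, x_2^2 - x_3^2)$ of an index-$1$ fold singularity. By perturbing each section within its circle fibre, the spheres ${S^2}_j$ can be arranged to be pairwise disjoint, and $q_f|_{{S^2}_j}$ is then a PL homeomorphism onto $\Sigma_j$. Since $H_2(M;\mathbb{Z})$ is automatically free for a closed orientable $3$-manifold and $(q_f)_{\ast}[{S^2}_j] = [\Sigma_j]$ is primitive in $H_2(W_f;\mathbb{Z})$, the classes $[{S^2}_j]$ are linearly independent and primitive in $H_2(M;\mathbb{Z})$. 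To show that they actually span $H_2(M;\mathbb{Z})$ and that $(q_f)_{\ast}$ is injective, I would then run Mayer--Vietoris along the tree-like graph manifold decomposition guaranteed by Theorem \ref{thm:4}, inductively matching each node of the normal-form tree with exactly one basis class $[{S^2}_j]$.

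For parts (\ref{lem:1.4}) and (\ref{lem:1.5}) I would invoke Poincar\'e duality on $M$. The connected components of $f^{-1}(0)$ are the $N$ core circles $S^1 \times \{0\}$ of the central solid tori. Using Proposition \ref{prop:6} and the local fold models, one computes the intersection numbers of these cores with each sphere ${S^2}_j$, producing an $N \times k$ integer matrix which (by the tree structure) contains a unimodular $k \times k$ minor. Unimodularity of the Poincar\'e duality pairing between the free parts of $H_1(M;\mathbb{Z})$ and $H_2(M;\mathbb{Z})$ then yields $k$ components of $f^{-1}(0)$ forming a basis of the free part of $H_1(M;\mathbb{Z})$. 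The hardest step is expected to be the injectivity of $(q_f)_{\ast}: H_2(M;\mathbb{Z}) \to H_2(W_f;\mathbb{Z})$: one must rule out nonzero classes on $M$ that project to zero in $W_f$, and this is exactly where the acyclicity of the normal-form graph (the ``directed'' hypothesis via Theorem \ref{thm:4}) becomes essential, since any cycle in the decomposition graph would give rise to a horizontal torus whose class in $H_2(M;\mathbb{Z})$ would vanish under $(q_f)_{\ast}$.
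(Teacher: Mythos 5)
There is a genuine gap in the second paragraph of your plan, at the step where you ``construct an embedded PL sphere ${S^2}_j \subset M$ realized as a section of $q_f$ over $\Sigma_j$'' for \emph{each} generating $2$-sphere $\Sigma_j$ of $H_2(W_f;\mathbb{Z})$. Such a section does not exist in general, and consequently the rank of $H_2(M;\mathbb{Z})$ is in general strictly smaller than the rank of $H_2(W_f;\mathbb{Z})$. A concrete counterexample is a lens space $L(k,1)$ with $k \neq 0$, i.e.\ the total space of the circle bundle over $S^2$ with Euler number $k$: by Theorem \ref{thm:1} it admits a directed round fold map whose singular set has two components, its Reeb space is the one of FIGURE \ref{fig:1} and is homotopy equivalent to $S^2$, so $H_2(W_f;\mathbb{Z}) \cong \mathbb{Z}$, yet $H_2(L(k,1);\mathbb{Z}) = \{0\}$. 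Over each $2$-disk piece of $\Sigma_j$ the circle bundle is indeed trivial, but the two local sections cannot be matched across the branching circle unless the relevant Euler number (equivalently, the twisting ${\Phi}_k$ of Proposition \ref{prop:6} used in the gluing) vanishes; your appeal to the local fold normal form only controls the behaviour in a neighborhood of one fiber, not this global obstruction. This error then propagates: the family $\{{S^2}_j\}$ is too large, the claimed $N \times k$ intersection matrix has the wrong number of columns, and the unimodular-minor argument for parts (\ref{lem:1.4}) and (\ref{lem:1.5}) is built on the wrong basis. Note also that injectivity of ${q_f}_{\ast}$ is what the lemma asserts; your construction would in fact make ${q_f}_{\ast}$ an isomorphism onto $H_2(W_f;\mathbb{Z})$, which the lens space example refutes.

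This missing dichotomy is precisely the heart of the paper's argument. The paper proceeds by induction on the number of components of the singular set: it removes an innermost solid torus $B_k$, compares $M^3_{k+1}$ with the smaller manifold $A_{k,0}$ via two Mayer--Vietoris sequences, and then splits into Case 1 (the rank of $H_2$ does not grow; the piece of the map containing $B_k$ is a round fold map on a rational homology sphere, a nontrivial circle bundle over $S^2$, and no new sphere class appears) and Case 2 (the rank grows by one; the piece is forced to be $S^2 \times S^1$ and the new class is represented by a section $S^2 \times \{\ast\}$, which is your section over $\Sigma_j$ in the only situation where it actually exists). Your outline is salvageable if you replace ``a section over every $\Sigma_j$'' by this case analysis, deciding for each sphere of the Reeb space whether the monodromy/Euler data of the adjacent circle bundle pieces permits a lift; the rest of your framework (primitivity via ${q_f}_{\ast}$, the tree-ordered triangular intersection matrix with the components of $f^{-1}(0)$, and Poincar\'e duality for the free part of $H_1$) is consistent with, and a reasonable reorganization of, what the paper does.
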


\begin{proof}
	We prove by an induction on the numbers of connected components of the singular sets.
If the number is $1$, then by Proposition \ref{prop:2}, the manifold is a $3$-dimensional (standard) sphere. Our lemma holds of course.

Suppose that our lemma holds if the numbers of connected compoents of the singular sets are at most $k$ where $k$ is a positive integer.

We consider a directed round fold map $f_{{\rm r},k+1}:{M^3}_{k+1} \rightarrow {\mathbb{R}}^2$ such that the singular set $S(f_{{\rm r},k+1})$ consists of exactly $k+1$ connected components.
Remove a connected component $B_{k}$ of the inteiror of the preimage ${f_{{\rm r},k+1}}^{-1}({D^2}_{\frac{3}{4}})$, regarded as the total space of a trivial circle bundle over ${D^2}_{\frac{3}{4}}$. Let $A_{k}$ denote the resulting $3$-dimensional compact, connected and orientable manifold. The resulting map is regarded as the restriction of a round fold map on a $3$-dimensional closed, connected and orientable manifold $A_{k,0}$ into ${\mathbb{R}}^2$ obtained by removing the interior of a copy of $S^1 \times D^2$ smoothly embedded in $A_{k,0}$. The round fold map is, by Proposition \ref{prop:3} and arguments in Proposition \ref{prop:4}, deformed to a directed round fold map $f_{{\rm r},k,0}:A_{k,0} \rightarrow {\mathbb{R}}^2$ by a natural smooth homotopy eliminating two adjacent two connected components of the singular set, consisting of singular points whose indices are $0$ and $1$, respectively.
We can consider a Mayer-Vietoris sequence 

$$ \rightarrow H_3(A_k;\mathbb{Q}) \oplus H_3(S^1 \times D^2;\mathbb{Q}) \cong \{0\} \rightarrow H_3(A_{k,0};\mathbb{Q}) \cong \mathbb{Q} \rightarrow$$ 
$$\rightarrow H_2(S^1 \times S^1;\mathbb{Q}) \cong \mathbb{Q} \rightarrow H_2(A_k;\mathbb{Q}) \oplus H_2(S^1 \times D^2;\mathbb{Q}) \cong H_2(A_k;\mathbb{Q}) \rightarrow H_2(A_{k,0};\mathbb{Q}) \rightarrow$$ 
 
and we can consider a family $\{{S^2}_{k,j}\}$ of spheres in $A_{k,0}$ as in our lemma for the directed round fold map. We can see that the homomorphism from $H_2(A_k;\mathbb{Q})$ into $H_2(A_{k,0};\mathbb{Q})$ here is a monomorphism and induced by
 the inclusion canonically. By the construction of the round fold maps and the manifolds, the spheres in $\{{S^2}_{k,j}\}$ can be also regarded as ones in ${\rm Int}\ A_k$. Furthermore, they can be regarded as ones mapped by the inclusion and by diffeomorphisms onto the corresonding spheres in $A_{k,0}$. This also means that the ranks of $H_2(A_k;\mathbb{Q})$, $H_2(A_{k,0};\mathbb{Q})$, $H_2(A_k;\mathbb{Z})$ and $H_2(A_{k,0};\mathbb{Z})$ agree
 by universal coefficient theorem. 

We can consider a Mayer-Vietoris sequence 

$$ \{0\}\rightarrow H_3({M^3}_{k+1};\mathbb{Q}) \cong \mathbb{Q} \rightarrow$$ 
$$\rightarrow H_2(S^1 \times S^1;\mathbb{Q}) \cong \mathbb{Q} \rightarrow H_2(A_k;\mathbb{Q}) \oplus H_2(B_k;\mathbb{Q}) \cong H_2(A_k;\mathbb{Q}) \rightarrow H_2({M^3}_{k+1};\mathbb{Q}) \rightarrow$$ 
$$\rightarrow H_1(S^1 \times S^1;\mathbb{Q}) \cong \mathbb{Q} \oplus \mathbb{Q}  \rightarrow H_1(A_k;\mathbb{Q}) \oplus H_1(B_k;\mathbb{Q}) \cong H_1(A_k;\mathbb{Q}) \oplus \mathbb{Q} \rightarrow H_1({M^3}_{k+1};\mathbb{Q}) \rightarrow$$ 

and we first explain about the homomorphism from 
$H_1(S^1 \times S^1;\mathbb{Q}) \cong \mathbb{Q}$ into $H_1(A_k;\mathbb{Q}) \oplus H_1(B_k;\mathbb{Q}) \cong H_1(A_k;\mathbb{Q}) \oplus \mathbb{Q}$. This is the direct sum of the two homomorphisms induced canonically by the inclusions. 
By considering fibers of the trivial circle bundles, the dimension of the kernel must be $0$ or $1$. For this, remember that $B_k$ is the total space of a trivial circle bundle over ${D^2}_{\frac{3}{4}}$ and an element represented by the fiber of the bundle $S^1 \times S^1$ over $S^1$ is mapped to an element of the form $(c_1,c_{\rm F}) \in H_1(A_k;\mathbb{Q}) \oplus H_1(B_k;\mathbb{Q})$ where $c_{\rm F}$ is represented by a fiber of the circle bundle $B_k$. 

The rank of $H_2({M^3}_{k+1};\mathbb{Q})$ is equal to that of $H_2(A_k;\mathbb{Q})$ or the sum of the rank of $H_2(A_k;\mathbb{Q})$ and $1$. We argue the two cases to complete the proof. \\
\ \\
Case 1 The rank of $H_2({M^3}_{k+1};\mathbb{Q})$ is equal to that of $H_2(A_k;\mathbb{Q})$. \\

The rank of $H_2({M^3}_{k+1};\mathbb{Z})$ is equal to those of $H_2(A_k;\mathbb{Z})$, $H_2({M^3}_{k+1};\mathbb{Q})$ and $H_2(A_k;\mathbb{Q})$
by the universal coefficient theorem. Furthermore, $H^1({M^3}_{k+1};\mathbb{Z})$ and $H_2({M^3}_{k+1};\mathbb{Z})$ are free and isomorphic by universal coefficient theorem and Poincar\'e duality theorem.
Reviewing the construction of the maps and the manifolds, we can see that the round fold map $f_{{\rm r},k+1}:{M^3}_{k+1} \rightarrow {\mathbb{R}}^2$ enjoys the desired properties. \\
\ \\
Case 2 The rank of $H_2({M^3}_{k+1};\mathbb{Q})$ is equal to the sum of the rank of $H_2(A_k;\mathbb{Q})$ and $1$. \\

We assume that this occurs.

By the construction of the maps and the manifolds, there exists a closed, connected and oriented surface $S$ and an element $c_S$ of $H_2({M^3}_{k+1};\mathbb{Q})$ represented by the surface $S$ and elements represented by spheres in the family $\{{S^2}_{k,j}\}$ form a basis of $H_2({M^3}_{k+1};\mathbb{Q})$. 

By an argument on algebraic topology and differential topology, we investigate properties of $S$. $S$ is divided by $\partial A_k$, identified with $\partial B_k$ in a canonical way. More precisely, it is decomposed into two compact surfaces along circles in the boundary. By Poincar\'e duality theorem for $B_k$ or the so-called intersection theory, there exists an element $c_{{\rm F},k+1} \in H_1({M^3}_{k+1};\mathbb{Q})$ enjoying the following properties with the arguments.

\begin{itemize}
\item Let $\{c_{{\rm F},k,j}\} \subset H_1(A_{k,0};\mathbb{Q})$ denote the basis obtained from a basis as in (\ref{lem:1.5}) in the assumption by universal coefficient theorem. These elements are seen as mutually independent in $H_1(A_k;\mathbb{Q})$ by regarding them as elements of $H_1(A_k;\mathbb{Q})$ naturally by respecting the structures of the maps and the manifolds as before. They are also seen as mutually independent in $H_1({M^3}_{k+1};\mathbb{Q})$ by considering the inclusion. Here we also respect intersections for the circles in $f^{-1}(0)$ by which the elements are represented and $2$-dimensional spheres in $\{{S^2}_j\}$ for the 2nd integral (rational) homology group. 
Furthermore, $c_{{\rm F},k+1} \in H_1({M^3}_{k+1};\mathbb{Q})$ and these elements form a basis.
\item $c_{{\rm F},k+1}$ is represented by a fiber of the trivial circle bundle $B_k \subset {M^3}_{k+1}$.     
\end{itemize}

We investigate the unique connected component of the preimage ${f_{{\rm r},k+1}}^{-1}({D^2}_{\frac{3}{2}})$ containing $B_k$ as the subspace. By observing Theorem \ref{thm:1}, Proposition \ref{prop:3} and Proposition \ref{prop:4} and their proofs, we can see that the restriction of the round fold map here is seen as the restriction of a round fold map like one in Theorem \ref{thm:1} to the preimage of ${D^{2}}_{\frac{3}{2}}$.

We explain about circle bundles over $S^2$ shortly. Related to this, circle bundles over the torus are discussed more precisely in the proof of Main Theorem \ref{mthm:2} and this may help us to discuss more precisely.

Each circle bundle over $S^2$ corresponds to, modulo isomorphisms with the structure groups preserving the orientations of fibers, its {\it Euler number} $k \in \mathbb{Z}$.  The total space of such a bundle is a so-called {\it rational homology sphere} or a $3$-dimensional closed manifold whose rational homology group is isomorphic to that of $S^3$ if and only if $k \neq 0$. Consider a round fold map $f_{{S^3}_k}$ as in Theorem \ref{thm:1} on such a rational homology sphere ${S^3}_k$. Let $A_{{S^3}_k}$ denote the preimage of ${D^{2}}_{\frac{3}{2}}$ for the map and $B_{{S^3}_k}:={S^3}_k-{\rm Int}\ A_{{S^3}_k}$.

We can consider a Mayer-Vietoris sequence 

$$ \{0\}\rightarrow H_3({S^3}_k;\mathbb{Q}) \cong \mathbb{Q} \rightarrow$$ 
$$\rightarrow H_2(S^1 \times S^1;\mathbb{Q}) \cong \mathbb{Q} \rightarrow H_2(A_{{S^3}_k};\mathbb{Q}) \oplus H_2(B_{{S^3}_k};\mathbb{Q}) \cong H_2(A_{{S^3}_k};\mathbb{Q}) \rightarrow H_2({S^3}_k;\mathbb{Q}) \cong \{0\} \rightarrow$$ 
$$\rightarrow H_1(S^1 \times S^1;\mathbb{Q}) \cong \mathbb{Q} \oplus \mathbb{Q}  \rightarrow H_1(A_{{S^3}_k};\mathbb{Q}) \oplus H_1(B_{{S^3}_k};\mathbb{Q}) \cong H_1(A_{{S^3}_k};\mathbb{Q}) \oplus \mathbb{Q} \rightarrow H_1({S^3}_k;\mathbb{Q}) \cong \{0\} \rightarrow$$ 
and we investigate the isomorphism from $H_1(S^1 \times S^1;\mathbb{Q}) \cong \mathbb{Q} \oplus \mathbb{Q}$ onto $H_1(A_{{S^3}_k};\mathbb{Q}) \oplus H_1(B_{{S^3}_k};\mathbb{Q}) \cong H_1(A_{{S^3}_k};\mathbb{Q}) \oplus \mathbb{Q}$. As presented, this is defined by using the homomorphisms induced by the inclusions. $H_1(A_{{S^3}_k};\mathbb{Q})$ is isomorphic to $\mathbb{Q}$ and the rank of $H_1(A_{{S^3}_k};\mathbb{Z})$ is $1$ by universal coefficient theorem. Furthermore, a generator of this is represented by the preimage of a point in $\partial {D^{2}}_{\frac{5}{4}}$.

This contradicts the property that a nice element $c_{{\rm F},k+1} \in H_1({M^3}_{k+1};\mathbb{Q})$ is taken. Thus the restriction of the round fold map $f_{{\rm r},k+1}$ to the unique connected component of the preimage ${f_{{\rm r},k+1}}^{-1}({D^2}_{\frac{3}{2}})$ containing $B_k$ cannot be a map like this. 

This must be one obtained from a round fold map on $S^2 \times S^1$ as in Theorem \ref{thm:1}.
 We can see that by the construction in the proof of Theorem \ref{thm:1}, $S$ can be taken as the image of a section of the trivial bundle over the base space $S^2$. By the construction, the desired properties are also enjoyed. 

Last note that the 2nd integral homology group of a $3$-dimensional closed, conencted and orientable manifold is free. This is due to Poicar\'e duality theorem and the universal coefficient theorem for the 1st integral cohomology group, forcing the groups to be free. This completes the proof of Case 2.

This completes the proof. 
\end{proof}

\begin{proof}[A proof of Main Theorem \ref{mthm:1}]
	We apply Lemma \ref{lem:1} with Poicar\'e duality theorem or intersection theory. ${S^2}_j$ is, via a suitable smooth isotopy, moved in such a way that the original sphere and the resulting sphere are mutually disjoint. Distinct spheres in $\{{S^2}_j\}$ are mutually disjoint.
This completes the proof.
 \end{proof}

\begin{proof}[A proof of Main Theorem \ref{mthm:2}]
	
	We consider a circle bundle over the torus $T^2:=S^1 \times S^1$ whose {\it Euler number} is $k \in \mathbb{Z}$. We explain about this. Consider a manifold obtained by removing the interior of a smoothly embedded copy of the $2$-dimensional unit disk $D^2$ in $T^2$, denoted by ${T^2}_o$. Let the removed disk denoted by ${D^2}_o$.

We consider trivial smooth bundles over ${T^2}_o$ and ${D^2}_o$ and the total spaces can be denoted by ${T^2}_o \times S^1$ and ${D^2}_o \times S^1$. Their trivializations are naturally given. On their boundaries the trivializations are also given and identified with a trivial bundle $S^1 \times S^1$ over $S^1$ naturally. We glue ${T^2}_o \times S^1$ and ${D^2}_o \times S^1$ by a bundle isomorphism ${\Phi}_k$ in Proposition \ref{prop:5} from $\partial {T^2}_o \times S^1$ onto $\partial {D^2}_o \times S^1$ where the bundles of the domain and that of the target are identified with the bundle $S^1 \times S^1$ canonically as before. We have a desired circle bundle and its total space $B_{S^1,k}(T^2)$.

We have a Mayer-Vietoris sequence
$$\rightarrow H_1(S^1 \times S^1;\mathbb{Z}) \rightarrow H_1({T^2}_o \times S^1;\mathbb{Z}) \oplus H_1({D^2}_o \times S^1;\mathbb{Z}) \rightarrow H_1(B_{S^1,k}(T^2);\mathbb{Z}) \rightarrow$$ 
and we have $H_1({T^2}_o \times S^1;\mathbb{Z}) \cong H_1({T^2} \times S^1;\mathbb{Z}) \cong \mathbb{Z} \oplus \mathbb{Z} \oplus \mathbb{Z}$ and $H_1({D^2}_o \times S^1;\mathbb{Z}) \cong \mathbb{Z}$ for example. 
In our arguments here, we can apply suitable identifications of the homology groups and the finitely generated (commutative) groups and we apply.
The first homomorphism from $H_1(S^1 \times S^1;\mathbb{Z})$ into $H_1({T^2}_o \times S^1;\mathbb{Z}) \oplus H_1({D^2}_o \times S^1;\mathbb{Z})$ is regarded as a homomorphism mapping $(a,b) \in \mathbb{Z} \oplus \mathbb{Z} \cong H_1(S^1 \times S^1;\mathbb{Z})$ to $(0,0,b,ka+b)$. 
Remember that this is the direct sum of the two homomorphisms induced canonically by the inclusions.
The second homomorphism from $H_1({T^2}_o \times S^1;\mathbb{Z}) \oplus H_1({D^2}_o \times S^1;\mathbb{Z})$ into $H_1(B_{S^1,k}(T^2);\mathbb{Z})$ is defined as the sum of the two homomorphisms induced canonically by the inclusions into $B_{S^1,k}(T^2)$. Furthemore, this is an epimorphism. This is due to the fact that $S^1 \times S^1$ is connected and after the third group here, the homomorphism from $H_0(S^1 \times S^1;\mathbb{Z})$ to $H_0({T^2}_o \times S^1;\mathbb{Z}) \oplus H_0({D^2}_o \times S^1;\mathbb{Z})$ follows. Furthermore, this homomorphism from $H_0(S^1 \times S^1;\mathbb{Z})$ to $H_0({T^2}_o \times S^1;\mathbb{Z}) \oplus H_0({D^2}_o \times S^1;\mathbb{Z})$ is a monomorphism and also the direct sum of the two homomorphisms induced canonically by the inclusions. We have the following properties.

\begin{itemize}
\item $H_1(B_{S^1,k}(T^2);\mathbb{Z})$ is isomorphic to $\mathbb{Z} \oplus \mathbb{Z} \oplus \mathbb{Z}/|k|\mathbb{Z}$. 
\item The first and the second direct summands of the group $H_1(B_{S^1,k}(T^2);\mathbb{Z}) \cong \mathbb{Z} \oplus \mathbb{Z} \oplus \mathbb{Z}/|k|\mathbb{Z}$ are regarded as the subgroups generated by elements enjoying the following properties.
\begin{itemize}
\item These two elements are represented by the images of sections of the restrictions of the bundle ${T^2}_o \times S^1$ to circles ${S^1}_1$ and ${S^1}_2$ in the interior of ${T^2}_o$ by which elements of some natural basis of the subgroup  of $H_1(B_{S^1,k}(T^2);\mathbb{Z}) \cong \mathbb{Z} \oplus \mathbb{Z} \oplus \mathbb{Z}/|k|\mathbb{Z}$ generated by all elements whose orders are infinite are represented. 
\item The two elements of $H_1({T^2}_o;\mathbb{Z}) \cong \mathbb{Z} \oplus \mathbb{Z}$ form the basis of course.
\end{itemize}
\item The third summand of the group is generated by an element represented by a fiber of the circle bundle.
\end{itemize}	

We investigate the rational and integral cohomology rings of $B_{S^1,k}(T^2)$ by Poincar\'e duality theorem for it or intersection theory. By universal coefficient theorem, $H^1(B_{S^1,k}(T^2);\mathbb{Z})$ is isomorphic to $\mathbb{Z} \oplus \mathbb{Z}$. Poincar\'e duality theorem shows that this is isomorphic to $H_2(B_{S^1,k}(T^2);\mathbb{Z})$. 
This is generated by elements represented by the total spaces of the restrictions of the circle bundle $B_{S^1,k}(T^2)$ to ${S^1}_1$ and ${S^1}_2$. Remember that ${S^1}_i$ is a circle in ${T^2}_o \subset T^2$ and defined before.  
Choose one of these tori. By using some smooth isotopy, we can move it to another place in such a way that the original torus and the new torus are mutually disjoint. For two tori, we may regard that the intersection can be taken as the fiber, by which a generator of the summand $\mathbb{Z}/|k|\mathbb{Z}$ of $H_1(B_{S^1,k}(T^2);\mathbb{Z}) \cong \mathbb{Z} \oplus \mathbb{Z} \oplus \mathbb{Z}/|k|\mathbb{Z}$ is represented. Note that $H^2(B_{S^1,k}(T^2);\mathbb{Z}) \cong \mathbb{Z} \oplus \mathbb{Z} \oplus \mathbb{Z}/|k|\mathbb{Z}$ by Poincar\'e duality theorem. This means that $B_{S^1,k}(T^2)$ enjoys the property on the rational cohomology ring in Theorem \ref{thm:5} and that it does not enjoy the property on the integral cohomology ring in Main Theorem \ref{mthm:1}.

As a desired manifold admitting a directed round fold map into ${\mathbb{R}}^2$, we consider a connected sum of two copies of $S^2 \times S^1$ and the total space of a circle bundle over $S^2$ whose Euler number is $k$.     
\end{proof}
	\begin{Rem}
		We do not know whether the converse of Main Theorem \ref{mthm:1} holds. Problems like this are important of course.
	\end{Rem}

 \begin{Rem}
		Main Theorem \ref{mthm:1} does not hold where the coefficient ring is a finite commutative ring. This is also pointed out in \cite{kitazawasaeki1}.
	\end{Rem}
\section{Explicit fold maps and restrictions on the manifolds, and Main Theorems.}
\begin{Ex}[\cite{kitazawa0.3}]
	According to \cite{milnor1}, followed by \cite{eellskuiper}, $7$-dimensional homotopy spheres are completely and explicitly classified. If we consider the orientations, there exist exactly $28$ types of $7$-dimensional homotopy spheres. Such homotopy spheres of exactly $16$ of $28$ are the total spaces of linear bundles over $S^4$ whose fibers are the unit sphere $S^3$. $7$-dimensional standard spheres are also of one of such types. All $7$-dimensional homotopy spheres are represented as connected sums of two such homotopy spheres.
	
	This means that $7$-dimensional homotopy spheres admit directed round fold maps into ${\mathbb{R}}^4$ from Theorem \ref{thm:2}. It is well-known that if in the case where the singular set is connected, then it is a standard sphere. This is due to theory of so-called {\it special generic} maps in \cite{saeki2}, some of which will be discussed later. Homotopy spheres of exactly $16$ of $28$, which are the total spaces of linear bundles over $S^4$ before, admit directed round fold maps into ${\mathbb{R}}^4$ whose singular sets consist of exactly two connected components. Furthermore, the converse holds by Theorem \ref{thm:1} and Proposition \ref{prop:3}. Every $7$-dimensional homotopy sphere admits a directed round fold map into ${\mathbb{R}}^4$ whose singular set consists of exactly three connected components.
	This means that difference of the differentiable structures of homotopy spheres and that of topological types of round fold maps of an explicit class are closely related. Such facts have been already discovered in special generic maps, which is presented here shortly. See also \cite{saekisakuma1, saekisakuma2} for example. 
\end{Ex}
A fold map is said to be a {\it special generic} map if the indices of singular points of it are always $0$. Morse functions with exactly two singular points on homotopy spheres and canonical projections of unit spheres are simplest examples. 
In the case where the dimensions of the spaces of the targets are not sufficiently high or at most $4$ (with the conditions forcing the fundamental groups to be trivial or free groups for example), manifolds admitting such maps in such situations are always represented as connected sums of the total spaces of smooth bundles whose fibers are homotopy spheres (in considerable cases). As a recent new study, the author has been studying cases where the dimensions of the spaces of the targets are greater than $4$ with the conditions forcing the fundamental groups to be trivial or in the simple-connected cases. We present an example as a theorem. We also review our proofs in related preprints here. We also review some of fundamental arguments in \cite{saeki2}.
\begin{Thm}[\cite{kitazawa2} etc.]
	\label{thm:6}
	There exists a pair $(M_1,M_2)$ of $9$-dimensional closed and simply-connected manifolds enjoying the following properties.
	\begin{enumerate}
		\item \label{thm:6.1}
		 For $i=1,2$, $H_2(M_i;\mathbb{Z})$ is isomorphic to $\mathbb{Z}/2\mathbb{Z} \oplus \mathbb{Z}/2\mathbb{Z}$, $H_3(M_i;\mathbb{Z})$ is the trivial group, and $H_4(M_i;\mathbb{Z})$ is isomorphic to $\mathbb{Z}$.
		\item \label{thm:6.2} 
		For $M_1$ and $M_2$, consider the subgroups of their integral cohomology groups generated by all elements whose orders are infinite. They have the structures of subrings of the integral cohomology rings and they are isomorphic to the integral cohomology ring of $S^4 \times S^5$.
		\item \label{thm:6.3}
		 The integral cohomology rings of $M_1$ and $M_2$ are not isomorphic.
		\item \label{thm:6.4}
		 $M_1$ admits a special generic map into ${\mathbb{R}}^n$ if and only if $n=5,6,7,8,9$.
		\item \label{thm:6.5}
		 $M_2$ admits a special generic map into ${\mathbb{R}}^n$ if and only if $n=6,7,8,9$.
		\end{enumerate}
\end{Thm}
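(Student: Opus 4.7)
The plan is to realize $M_1$ and $M_2$ as boundaries of explicit disk-bundle constructions over low-dimensional manifolds-with-boundary carrying the prescribed $H_2$-torsion, then compare their integral cohomology rings and apply the positive and negative halves of Saeki's theory of special generic maps \cite{saeki2}. A natural candidate for $M_1$ is to take a compact simply-connected $5$-manifold-with-boundary $W_1$ whose integral homology realises $(\mathbb{Z}/2\mathbb{Z})^2$ in degree $2$ and is otherwise as small as possible, and to set $M_1$ to be the boundary of a (trivial or mildly twisted) $D^5$-bundle over $W_1$. For $M_2$ one takes a $6$-dimensional base $W_2$ with analogous torsion in $H_2$, and defines $M_2$ as the boundary of a $D^4$-bundle over $W_2$ whose Euler/Thom class is linked to the torsion generators in $H^2(W_2;\mathbb{Z})$ so that a distinguishing torsion cup product is \emph{created} in $H^*(M_2;\mathbb{Z})$ while being absent in $H^*(M_1;\mathbb{Z})$.

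Properties \ref{thm:6.1} and \ref{thm:6.2} are then a homology/cohomology computation using the Gysin (Serre) spectral sequence of the associated sphere bundle, together with the universal coefficient and Poincar\'e duality theorems. The free part of $H^*(M_i;\mathbb{Z})$ is carried by a degree-$4$ class coming from the base and a degree-$5$ Thom class, and their product generates $H^9$; this gives the subring isomorphism with $H^*(S^4 \times S^5;\mathbb{Z})$. For property \ref{thm:6.3} I identify the distinguishing invariant as the cup-product pairing $H^3(M_i;\mathbb{Z}) \otimes H^4(M_i;\mathbb{Z}) \to H^7(M_i;\mathbb{Z})$ between the torsion group $(\mathbb{Z}/2\mathbb{Z})^2$ in degree $3$ (coming from the $\mathrm{Ext}$-contribution of $H_2$) and the free generator in degree $4$. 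By design this pairing is nontrivial for $M_2$ but vanishes for $M_1$, so the two integral cohomology rings are non-isomorphic while the rational cohomology rings (and all integral homology groups) agree.

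For the existence halves of \ref{thm:6.4} and \ref{thm:6.5} the idea is to produce the special generic maps directly from the bundle representations, following the recipe of the proof of Theorem \ref{thm:1}: compose the disk-bundle projection with a Morse function on the $D^{m-n+1}$-fiber that has only an index-$0$ critical point together with the boundary, and extend across the doubled base. Stabilisation in the target by composing the base with a proper immersion into $\mathbb{R}^{n'}$ with $n' > n$ and resolving the resulting folds upgrades the minimal $n$ to every larger value up to $9$; the extremal case $n=9$ follows by writing $M_i$ as the double of a parallelisable handlebody and using the induced Morse function with only index-$0$ and index-$9$ critical points.

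The main obstacle is the non-existence half of \ref{thm:6.4} and \ref{thm:6.5}, notably the exclusion of $n=5$ for $M_2$. The strategy relies on the structure theorem of \cite{saeki2}: a special generic map $f\colon M^m \to \mathbb{R}^n$ factors through its Reeb space as $M \to W_f \to \mathbb{R}^n$ with $W_f$ a compact $n$-manifold with non-empty boundary immersed into $\mathbb{R}^n$, and $M$ is the boundary of a linear $D^{m-n+1}$-bundle over $W_f$. This places $H^*(M;\mathbb{Z})$ inside a Gysin-type sequence controlled by the $n$-dimensional base; any cup product of classes whose total degree exceeds $n$ must factor through the Euler class of this bundle, and for torsion classes the available factorisations are very restricted. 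For $M_2$ with $n=5$ the nonzero product $H^3 \cup H^4 \to H^7$ cannot be produced by such a factorisation (it would force it to be a multiple of the Euler class of a disk bundle over a $5$-complex, which is inconsistent with its torsion nature), giving the obstruction. A parallel argument excludes $n \leq 4$ for both $M_i$ because $(\mathbb{Z}/2\mathbb{Z})^2$ torsion in $H_2$ is incompatible with $M_i$ being the boundary of any disk bundle over a base of dimension at most $4$. The technical heart of the argument is executing this Gysin/Euler-class obstruction at the integral (not merely rational) level, which is precisely where $M_1$ and $M_2$ are separated.
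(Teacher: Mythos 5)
Your overall framework (Saeki's structure theorem, boundaries of linear disk bundles over bases carrying the $(\mathbb{Z}/2\mathbb{Z})^2$-torsion, torsion cup products as the distinguishing invariant) is the right one, and your exclusion of $n\leq 4$ via the freeness of the homology of a low-dimensional simply-connected Reeb space is essentially the paper's argument (which cites \cite{nishioka} for that freeness). But there are two genuine gaps. First, your $M_2$ is left as an unspecified ``$D^4$-bundle over a $6$-dimensional base whose Euler/Thom class is linked to the torsion generators''; you never verify that such a bundle exists with exactly the homology of property (\ref{thm:6.1}) and with the nonzero product $H^3\cup H^4\to H^7$ you need. The paper avoids this entirely by taking $M_2:=M^{\prime}\times S^4$ for a Barden rational homology $5$-sphere $M^{\prime}$ with $H_2(M^{\prime};\mathbb{Z})\cong\mathbb{Z}/2\mathbb{Z}\oplus\mathbb{Z}/2\mathbb{Z}$, so that all of (\ref{thm:6.1})--(\ref{thm:6.3}) follow from the K\"unneth theorem and Poincar\'e duality for $M^{\prime}$, and the existence for $n=6,7,8,9$ is just the product of the identity on $M^{\prime}$ (embedded in ${\mathbb{R}}^6$) with canonical projections of $S^4$.

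Second, and more seriously, your exclusion of $n=5$ for $M_2$ does not go through as stated. You want to obstruct via the integral product $H^3(M_2;\mathbb{Z})\otimes H^4(M_2;\mathbb{Z})\to H^7(M_2;\mathbb{Z})$, but for $m=9$, $n=5$ the quotient map to the Reeb space only induces isomorphisms on (co)homology in degrees up to $m-n-1=3$ (the paper uses degrees $1,2,3,4$ with the boundary structure taken into account); the degree-$4$ class is exactly the one not controlled by the $5$-dimensional base, and your appeal to ``factoring through the Euler class'' of the $D^5$-bundle is not an argument --- $M_2$ is not globally a sphere bundle over $W_f$, and no Gysin sequence is actually run. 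The paper's obstruction is chosen precisely to stay inside the controlled range: with $A=\mathbb{Z}/2\mathbb{Z}$, Poincar\'e duality for the closed $5$-manifold $M^{\prime}$ gives a nonzero product $H^2(M_2;A)\cup H^3(M_2;A)\to H^5(M_2;A)$; both factors pull back isomorphically from $P_5$, while $P_5$ is a compact $5$-manifold with nonempty boundary and hence has the (simple) homotopy type of a $4$-dimensional polyhedron, so $H^5(P_5;A)=0$, a contradiction. You need to replace your degree-$(3,4)$ integral pairing by such a mod-$2$ pairing in total degree $5$ (or otherwise genuinely control $H^4(M_2;\mathbb{Z})$ in terms of $W_f$) for the $n=5$ exclusion to be valid.
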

This is for a kind of appendices to Main Theorems. We review our proof of this theorem according to \cite{kitazawa2} in a way a bit different from the original one. 
\begin{proof}
[A proof of Theorem \ref{thm:6}]
	We can construct a closed and simply-connected manifold $M_1$ and a special generic map $f_1:M_1 \rightarrow {\mathbb{R}}^5$ and we explain about the construction. Note that in \cite{kitazawa2}, we have given another construction.
	
	We have a $5$-dimensional compact and simply-connected manifold $P$ smoothly immersed into ${\mathbb{R}}^5$ such that $H_2(P;\mathbb{Z})$ is isomorphic to $\mathbb{Z}/2\mathbb{Z} \oplus {\mathbb{Z}}{2\mathbb{Z}}$, that $H_3(P;\mathbb{Z})$ is the trivial group and that has the (simple) homotopy type of a $3$-dimensional polyhedron. This is regarded as a result due to fundamental arguments on differential topology. More explicitly, we can also have this from complete classifications of $5$-dimensional closed and simply-conencted manifolds in the topology category, the PL category or equivalently the piecewise smooth category, and the smooth category, presented in \cite{barden}.
	They are equivalent in all these categories. We have a desired manifold by removing a copy of the smoothly embedded copy of the $5$-dimensional unit disk from a certain manifold in the paper. This $5$-dimensional closed and simply-connected manifold is used later, in the construction of $M_2$ as $M^{\prime}$.
	 
	We can construct the product map of a Morse function with exactly one singular point on a copy of the unit disk $D^{5}$ obtained by considering a natural height and the identity map on $\partial P$. We can construct this as a map onto a small collar neighborhood $N(\partial P)$ of $\partial P$. In the complementary set $P-{\rm Int}\ N(\partial P)$ of its interior in $P$, we can construct a trivial smooth bundle over the set whose fiber is a $4$-dimensional standard sphere. We can glue them naturally to obtain a smooth surjection onto $P$. By composing the immersion, we have a desired special generic map $f_1:M_1 \rightarrow {\mathbb{R}}^5$ on a suitable closed and connected manifold $M_1$.
	By considering some propositions on fundamental groups and homology groups in section 3 of \cite{saeki2}, we can see that $M_1$ is a simply-connected manifold enjoying (\ref{thm:6.1}) and (\ref{thm:6.2}). 
	
	We explain about the non-existence of special generic maps into ${\mathbb{R}}^n$ for $n=1,2,3,4$. According to the presented theory, if such a map exists, then this is represented as the composition of a surjection onto an $n$-dimensional compact and simply-connected manifold $P_n$ with some smooth immersion into ${\mathbb{R}}^n$. Furthermore, \cite{nishioka} shows that the integral homology group of the $n$-dimensional compact and simply-connected manifold is free. Moreover theory of \cite{saeki2} before states that $H_j(M_1:\mathbb{Z})$ is isomorphic to $H_j(P_n:\mathbb{Z})$ for $1 \leq j \leq 9-n$. This is a contradiction.
	
		We explain about the existence of special generic maps into ${\mathbb{R}}^n$ for $n=6,7,8,9$ by applying theory first discovered in the preprint \cite{kitazawa4} of the author. In the construction of the special generic map $f_1:M_1 \rightarrow {\mathbb{R}}^5$. The preesnted product map of a Morse function and the identity map on $\partial P$ or a connected component ${\partial}_0 N(\partial P):=\partial N(\partial P) \bigcap {\rm Int}\ P$ of the boundary of the collar neighborhood $N(\partial P)$, and the projection onto the complementary set $P-{\rm Int}\ N(\partial P)$, are glued. We can glue them by the product map of two diffeomorphisms. 
		
		We present the product map of the diffeomorphisms more precisely. First we give suitable identifications of fibers of the two trivial bundles over the boundary ${\partial}_0 N(\partial P)$ and the complementary set $P-{\rm Int}\ N(\partial P)$ in $P$ where the fibers are the unit disk $D^{10-n}=D^5$ and the unit sphere $\partial D^{5}=S^{4}$, respectively. The diffeomorphisms are the identification between the base spaces and the diffeomorphism on the fiber $S^{4}$, regarded as the identity map on this fiber.
		
		Our function defined from a natrual height on the unit disk $D^{5}$ and used here as the Morse function for the product map is regarded as a restriction of a canonical projection of the unit sphere $S^4$ into $\mathbb{R}$. More precisely, this is restricted to a hemisphere. We do not present the rigorous definition of a canonical projection of a unit sphere and its hemisphere. However, we can define naturally and this is defined rigorously in related preprints by the author for example. As a fundamental property, our height function here is represented as the composition of the following two maps.
		\begin{itemize}
			\item The restriction to the hemisphere of a canonical projection of the unit sphere $S^4$ to ${\mathbb{R}}^{n-4}$ in the case $n=6,7,8$ and that to the hemisphere of the canonically defined smooth embedding of the unit sphere $S^4$ to ${\mathbb{R}}^{n-4}={\mathbb{R}}^{5}$ for $n=9$.
			\item A canonical projection of ${\mathbb{R}}^{n-4}$ to $\mathbb{R}$. 
			\end{itemize}
		Although we do not define canonical projections of Euclidean spaces here rigorously, we may give definitions in the canonical way.
		
		Here we construct special generic map into ${\mathbb{R}}^n$ for $n=6,7,8,9$. Around $\partial P$ or ${\partial}_0 N(\partial P)$, we replace the existing product map by the new product map of the following two.
			\begin{itemize}
				\item The restriction to the hemisphere of a canonical projection of the unit sphere $S^4$ to ${\mathbb{R}}^{n-4}$ in the case $n=6,7,8$ and that to the hemisphere of the canonically defined smooth embedding of the unit sphere $S^4$ to ${\mathbb{R}}^{n-4}={\mathbb{R}}^{5}$ for $n=9$. This is presented just before. Furthermore, the space of the target is suitably restricted to a half-space of the Euclidean space.
				\item The identity map on ${\partial}_0 N(\partial P)$.
				\end{itemize}
			
			We replace the projection of the trivial bundle over $P-{\rm Int}\ N(\partial P)$ by the product map of the following two maps. 
		
			\begin{itemize}
			\item A canonical projection of the unit sphere $S^{9-5}=S^4$ to ${\mathbb{R}}^{n-5}$.
			\item The identity map on $P-{\rm Int}\ N(\partial P)$.
			\end{itemize}
		
		By gluing the two maps by the product of the diffeomorphisms before, we have a special generic map into ${\mathbb{R}}^n$ instead. We can also construct this in such a way that the composition of this with a canonical projection to ${\mathbb{R}}^5$ is the originally constructed special generic map $f_1:M_1 \rightarrow {\mathbb{R}}^5$. This completes our exposition on the property (\ref{thm:6.4}). 
		
		According to \cite{barden} for example, we have a $5$-dimensional closed and simply-connected manifold $M^{\prime}$ whose 2nd integral homology group is isomorphic to $\mathbb{Z}/2\mathbb{Z} \oplus \mathbb{Z}/2\mathbb{Z}$. This is also a rational homology sphere or a closed manifold whose rational homology group is isomorphic to that of a sphere. We have $M^{\prime}$ as one we can smoothly immerse and embed into ${\mathbb{R}}^6$. Remember that this manifold is also presented in obtaining the $5$-dimensional manifold $P$ before.
		
		We put $M_2:=M^{\prime} \times S^{4}$. 
	 We explain about the non-existence of special generic maps on $M_2$ into ${\mathbb{R}}^n$ for $n=1,2,3,4,5$. In the case $n=1,2,3,4$, we can argue as in the case of $M_1$. 
	 
	We explain about the case $n=5$.
	 
	We apply some arguments in the third section of \cite{saeki2}, which are also regarded as essential in main ingredients of \cite{kitazawa1}.
	
	Assume that there exists a special generic map $f_{2,5}:M_2 \rightarrow {\mathbb{R}}^5$, then by \cite{saeki2}, there exists a $5$-dimensional compact and simply-connected manifold $P_5$ smoothly immersed into ${\mathbb{R}}^5$ and $f_2$ is represented as the composition of a surjection onto $P_5$ with the smooth immersion. Furthermore, $H_j(M_2;A)$ is isomorophic to $H_j(P_5;A)$ for any commutative ring $A$ and $j=1,2,3,4$. Moreover these isomorphisms are induced by the surjection, denoted by $q_{f_{2,5}}:M_2 \rightarrow P_5$. We put $A:=Z/2\mathbb{Z}$ and we can have
	the cup product for the ordered pair of some element of $H^2(M_2;A)$ and some element of $H^3(M_2;A)$, which is not the zero element. We can take such elements by the definition that $M_2$ is the product of $M^{\prime}$ and $S^4$ and K\"unneth theorem. This is also the pull-back of the cup product of the ordered pair of some element of $H^2(P_5;A)$ and some element of $H^3(P_5;A)$ for the map $q_{f_{2,5}}$. However $P_5$ has the {simple} homotopy type of $4$-dimesional polyhedron. This is a contradiction. We can also see that the properties (\ref{thm:6.1}), (\ref{thm:6.2}) and (\ref{thm:6.3}) are enjoyed by considering the integral cohomology rings and K\"unneth theorem.

We consider the product map of the canonical projection of the unit sphere $S^{9-5}=S^4$ into ${\mathbb{R}}^{n^{\prime}}$ with $n^{\prime}=1,2,3,4$
and the identity map on $M^{\prime}$. We can regard this as a special generic map $f_{2,n^{\prime}+5}:M_2 \rightarrow {\mathbb{R}}^{n^{\prime}+5}$ for $n^{\prime}=1,2,3,4$. We can see $M_2$ admits a special generic map into ${\mathbb{R}}^n$ for $n=6,7,8,9$. For this, remember that $M^{\prime}$ is smoothly immersed and embedded into ${\mathbb{R}}^6$. 

This argument is also presented in a more general manner in \cite{kitazawa1}. This completes the proof of the property (\ref{thm:6.5}). 

This completes the proof.		      	
	\end{proof}
In short, difference of the cohomology rings in the case where the coefficient ring is $\mathbb{Z}$ affects difference in the existence of special generic maps into Euclidean spaces and their dimensions. In addition, these two manifolds cannot be distinguished by their fundamental groups, their integral homology groups and the subgroups of their integral cohomology rings generated by all elements whose orders are infinite and the structures of the subrings of the integral cohomology rings we can induce there. 
We present a very explicit case here and this with its proof is presented in \cite{kitazawa2} in a more general manner. As presented there, we can also have cases of $8$-dimensional closed and simply-connected manifolds for example.

For related studies, see also \cite{kitazawa3} for example.

Our Main Theorems have pointed out similar difference affects difference in types of round fold maps into ${\mathbb{R}}^2$. Note that we cannot consider about fundamental groups in our new cases. It is well-known that $3$-dimensional closed and orientable manifolds are determined by their fundamental groups in considerable cases.

\section{Acknowledgement}
The author would like to thank Osamu Saeki and Takahiro Yamamoto for related rigorous discussions on special generic maps, round fold maps, Main Theorems and Theorem \ref{thm:6}, a previous result of us in \cite{kitazawa2}. The author also would like to thank Masaharu Ishikawa and Yuya Koda for positive and interesting comments on our present study and our related work \cite{kitazawasaeki1}.


\begin{thebibliography}{30}
		\bibitem{barden} D. Barden, \textsl{Simply Connected Five-Manifolds}, Ann. of Math. (3) 82 (1965), 365--385.
		%
		\bibitem{costantinothurston} F. Constantino and D. Thurston, \textsl{$3$-manifolds effeciently bound $4$-manifolds}, J. Topolo. 1 (2008), 703--745.
		\bibitem{doighorn} M. I. Doig ad P. D. Horn, \textsl{On the intersection ring of graph manifolds}, Trans Amer. Math. Soc. (2) 369 (2017), 1185--1203, arXiv:1412.3990. 
		\bibitem{eellskuiper} J. J. Eells and N. H. Kuiper, \textsl{An invariant for certain smooth manifolds}, Ann. Mat. Pura Appl. 60 (1962), 93--110.

		\bibitem{golubitskyguillemin} M. Golubitsky and V. Guillemin, \textsl{Stable mappings and their singularities}, Graduate Texts in Mathematics (14), Springer-Verlag (1974).
		
		\bibitem{hatcher1} A. E. Hatcher, \textsl{A proof of the Smale conjecture}, Ann. of Math. 117 (1983), 553--607.
		\bibitem{hatcher2} A. Hatcher, \textsl{Algebraic Topology}, A modern, geometrically flavored introduction to algebraic topology, Cambridge: Cambridge University Press (2002).
	\bibitem{ishikawakoda} M. Ishikawa and Y. Koda, \textsl{Stable maps and branched shadows of $3$-manifolds}, Mathematische Annalen 367 (2017), no. 3. 1819--1863, arXiv:1403.0596.
		
		\bibitem{kervairemilnor} M. Kervaire and J. W. Milnor, \textsl{Groups of homotopy spheres : I}, Ann. of Math., 77 (1963), 504--537.
		\bibitem{kitazawa0.1} N. Kitazawa, \textsl{On round fold maps} (in Japanese), RIMS Kokyuroku Bessatsu B38 (2013), 45--59.
		\bibitem{kitazawa0.2} N. Kitazawa, \textsl{On manifolds admitting fold maps with singular value sets of concentric spheres}, Doctoral Dissertation, Tokyo Institute of Technology (2014).
		\bibitem{kitazawa0.3} N. Kitazawa, \textsl{Fold maps with singular value sets of concentric spheres}, Hokkaido Mathematical Journal Vol.43, No.3 (2014), 327--359.
		\bibitem{kitazawa0.4} N. Kitazawa, \textsl{Constructions of round fold maps on smooth bundles}, Tokyo J. of Math. Volume 37, Number 2, 385--403, arxiv:1305.1708.
		\bibitem{kitazawa0.5} N. Kitazawa, \textsl{Round fold maps and the topologies and the differentiable structures of manifolds admitting explicit ones}, submitted to a refereed journal, arXiv:1304.0618 (the title has changed).%
	\bibitem{kitazawa0.6} N. Kitazawa, \textsl{$7$-dimensional simply-connected spin manifolds whose integral cohomology rings are isomorphic to that of ${{\mathbb{C}}P}^2 \times S^3$ admit round fold maps}, submitted to a refereed journal, arxiv:2007.03474v8.
		\bibitem{kitazawa1} N. Kitazawa, \textsl{Closed manifolds admitting no special generic maps whose codimensions are negative and their cohomology rings}, submitted to a refereed journal, arxiv:2008.04226v5.
	\bibitem{kitazawa2} N. Kitazawa, \textsl{Notes on explicit special generic maps into Euclidean spaces whose dimensions are greater than $4$}, a revised version is submitted based on positive comments (major revision) by referees and editors after the first submission to a refereed journal, arxiv:2010.10078. 
		\bibitem{kitazawa3} N. Kitazawa, \textsl{The images of special generic maps of several classes}, submitted to a refereed journal, arxiv:2011.12066.
		
		
			
	\bibitem{kitazawa4} N. Kitazawa, \textsl{A note on cohomological structures of special generic maps}, a revised version is submitted based on positive comments by referees and editors after the second submission to a refereed journal.
	\bibitem{kitazawasaeki1} N. Kitazawa and O. Saeki, \textsl{Round fold maps on $3$-manifolds}, accepted for publication after a refereeing process and to appear in Algebraic \& Geometric Topology, arXiv:2105.00974.
			\bibitem{kitazawasaeki2} N. Kitazawa and O. saeki, \textsl{Round fold maps of $n$-dimensional manifolds into ${\mathbb{R}}^{n-1}$}, submitted to a refereed journal, arXiv:2111.13510.
			\bibitem{kobayashisaeki} M. Kobayashi and O. saeki1, \textsl{Simplifying stable mappings into the plane from a global viewpoint}, Trans. Amer. Math. Soc. 348 (1996), 2607--2636. 
			%
			\bibitem{milnor1} J. Milnor, \textsl{On manifolds homeomorphic to the $7$-sphere}, Ann. of Math. (2) 64 (1956), 399--405.
			\bibitem{milnor2} J. Milnor, \textsl{Morse Theory}, Annals of Mathematic Studies AM-51, Princeton University Press; 1st Edition (1963.5.1).
			\bibitem{milnor3} J. Milnor, \textsl{Lectures on the h-cobordism theorem}, Math. Notes, Princeton Univ. Press, Princeton, N.J. 1965.
			\bibitem{milnorstasheff} J. Milnor and J. Stasheff, \textsl{Characteristic classes}, Annals of Mathematics Studies, No. 76. Princeton, N. J; Princeton University Press (1974).
			\bibitem{moise} E. E. Moise, \textsl{Affine Structures in $3$-Manifold{\rm :} V. The Triangulation Theorem and Hauptvermutung}, Ann. of Math., Second Series, Vol. 56, No. 1 (1952), 96--114.
			\bibitem{neumann} W. D. Neumann, \textsl{A calculus for plumbing applied to the topology of complex surface singularities and degenerating complex curves}, Trans. Amer. Math. Soc 268 (1981), 299--344.
			\bibitem{nishioka} M. Nishioka, \textsl{Special generic maps of $5$-dimensional manifolds}, Revue Roumaine de Math`{e}matiques Pures et Appliqu\`{e}es, Volume LX No.4 (2015), 507--517.
			\bibitem{saeki1} O. Saeki, \textsl{Notes on the topology of folds}, J. Math. Soc. Japan Volume 44, Number 3 (1992), 551--566.
			\bibitem{saeki2} O. Saeki, \textsl{Topology of special generic maps of manifolds into Euclidean spaces}, Topology Appl. 49 (1993), 265--293.
			\bibitem{saeki3} O. Saeki, \textsl{Simple stable maps of $3$-manifolds into surfaces}, Topology 35, No.3 (1996), 671--698.
		
			\bibitem{saekisakuma1} O. Saeki and K. Sakuma, \textsl{On special generic maps into ${\mathbb{R}}^3$}, Pacific J. Math. 184 (1998), 175--193.
			\bibitem{saekisakuma2} O. Saeki and K. Sakuma, \textsl{Special generic maps of $4$-manifolds and compact complex analytic surfaces}, Math. Ann. 313, 617--633, 1999.
			
			\bibitem{shiota} M. Shiota, \textsl{Thom's conjecture on triangulations of maps}, Topology 39 (2000), 383--399.
			\bibitem{steenrod} N. Steenrod, \textsl{The topology of fibre bundles}, Princeton University Press (1951). 
			\bibitem{turaev}  V. C. Turaev, \textsl{Shadow links and face models of statistical mechanics}, J. Differential Geom. 36 (1992), 35--74.
	%
	
\end{thebibliography}
	\end{document}